\def\timestamp{%
Time-stamp: <main.tex: Tuesday 02-02-2021 at 17:48:50 (cet)>}
\def\stripname Time-stamp: <#1 #2>{#2}
\edef\filedate{\expandafter\stripname\timestamp}
\theoremstyle{definition}
\newtheorem{defin}{Definition}[section]
\theoremstyle{plain}
\newtheorem{prop}[defin]{Proposition}
\newtheorem{lem}[defin]{Lemma}
\newtheorem{teo}[defin]{Theorem}
\newtheorem{cor}[defin]{Corollary}
\theoremstyle{remark}
\newtheorem{question}[defin]{Question}
\DeclareMathOperator{\supp}{supp}
\DeclareMathOperator{\diam}{diam}
\DeclareMathOperator{\ult}{Ult}
\newcommand\dotcup{\mathbin{\dot\cup}}
\newcommand\cee{\mathfrak{c}}
\mathchardef\mathhyphen "002D   
\newcommand\Ulim{\mathcal{U}\mathhyphen\!\lim}
\newcommand\plim[1][p]{#1\mathhyphen\!\lim}
\DeclareMathSymbol\Q0{AMSb}{`Q}
\DeclareMathSymbol\R0{AMSb}{`R}
\DeclareMathSymbol\Ss0{AMSb}{`S}
\DeclareMathSymbol\T0{AMSb}{`T}
\DeclareMathSymbol\Z0{AMSb}{`Z}
\newcommand\calA{\mathcal{A}}
\newcommand\calE{\mathcal{E}}
\newcommand\calF{\mathcal{F}}
\newcommand\calM{\mathcal{M}}
\newcommand\calN{\mathcal{N}}
\newcommand\calS{\mathcal{S}}
\newcommand\calT{\mathcal{T}}
\newcommand\calU{\mathcal{U}}
\newcommand\calV{\mathcal{V}}
\begin{document}
	
\title[Large countably compact free Abelian groups]%
  {Countably compact group topologies on arbitrarily large free Abelian groups}
	
\author[M. K. Bellini]{Matheus K. Bellini\dag}
	
\author[K. P. Hart]{Klaas Pieter Hart}
\address[K. P. Hart]{
Faculty EEMCS\\
TU Delft\\
Postbus 5031\\
2600~GA {} Delft\\
the Netherlands}
\email{k.p.hart@tudelft.nl}
\urladdr{http://fa.its.tudelft.nl/\~{}hart}
	
\author[V. O. Rodrigues]{Vinicius O. Rodrigues\ddag}
	
\author[A. H. Tomita]{Artur H. Tomita\P}
\address[M. K. Bellini, V. O. Rodrigues, A. H. Tomita]{Depto de Matem\'atica, 
         Instituto de Matem\'atica e Estat\'istica, 
         Universidade de S\~ao Paulo, Rua do Mat\~ao, 
         1010 -- CEP 05508-090, 
         S\~ao Paulo, SP - Brazil}
\email{matheusb@ime.usp.br}
\email{vinior@ime.usp.br}
\email{tomita@ime.usp.br}

\thanks{\dag 
        The first author received financial support from FAPESP 2017/15709-6.}
\thanks{\ddag 
        The third author received financial support from FAPESP 2017/15502-2. Corresponding Author.}
\thanks{\P 
        The fourth author received financial support from FAPESP 2016/26216-8. The fourth author thanks the second author for the hospitality during his visit to TU Delft in April 2019.}

\subjclass[2010]{Primary 54H11, 22A05; Secondary 54A35, 54G20.}
	
\date{\filedate}
	
	
\commby{}
	
\keywords{Topological group, countable compactness, selective ultrafilter, 
          free Abelian group, Wallace's problem}
	
\begin{abstract}
We prove that if there are $\cee$ incomparable selective ultrafilters 
then, for every infinite cardinal~$\kappa$ such that $\kappa^\omega=\kappa$, 
there exists a group topology on the free Abelian group of cardinality~$\kappa$
without nontrivial convergent sequences and such that every finite power is 
countably compact. 
In particular, there are arbitrarily large countably compact groups. 
This answers a 1992 question of D. Dikranjan and D. Shakhmatov. 
\end{abstract}
	
\maketitle
	
\section{Introduction}
	
\subsection{Some history}
	
It is well known that a non-trivial free Abelian group does not admit a compact
Hausdorff group topology. 
Tomita \cite{tomita2} showed that it does not admit even a group topology 
whose countable power is countably compact .
	
Tkachenko \cite{tkachenko} showed in 1990 that the free Abelian group 
generated by $\cee$~elements can be endowed with a countably compact 
Hausdorff group topology under~CH.  
Tomita \cite{tomita2}, 
Koszmider, Tomita and Watson~\cite{koszmider&tomita&watson}, and 
Madariaga-Garcia and Tomita \cite{madariaga-garcia&tomita} 
obtained such examples using weaker assumptions. 
Boero, Castro Pereira and Tomita obtained such an example using a single 
selective ultrafilter~\cite{boero&castro-pereira&tomitaoneselective}. 
Using $2^\cee$~selective ultrafilters, the example 
in~\cite{madariaga-garcia&tomita} showed the consistency of a countably 
compact group topology on the free Abelian group of cardinality~$2^\cee$. 
All forcing examples obtained so far had their cardinalities bounded 
by~$2^\cee$.
	
Boero and Tomita \cite{boero&tomita2} showed from the existence 
of $\cee$ selective ultrafilters that there exists a free Abelian group 
of cardinality~$\cee$ whose square is countably compact. 
Tomita \cite{tomita2015} showed that there exists a group topology on the 
free Abelian group of cardinality~$\cee$ that makes all its finite powers 
countably compact. 
	
E. van Douwen showed in~\cite{vandouwen} that the cardinality
of a countably compact group cannot be a strong limit of countable cofinality.
	
Using the result in the abstract, we obtain the following:
	
\begin{teo} 
Assume $\mathrm{GCH}$. 
Then a free Abelian group of infinite cardinality~$\kappa$ can be endowed with 
a countably compact group topology (without non-trivial convergent sequences) 
if and only if $\kappa =\kappa^\omega$.
\end{teo}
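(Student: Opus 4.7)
The plan is to split the theorem into its two implications. The \emph{if} direction is essentially an application of the main theorem of the paper (as stated in the abstract); the \emph{only if} direction combines van~Douwen's classical cardinality bound with a short computation in cardinal arithmetic under~GCH.

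For the \emph{if} direction, suppose $\kappa$ is infinite with $\kappa^\omega=\kappa$. Under GCH, and in particular under~CH, a standard transfinite construction yields $2^\cee$ pairwise incomparable selective ultrafilters on~$\omega$; a fortiori, there are~$\cee$ of them. This is precisely the hypothesis of the theorem announced in the abstract, and applying it produces on the free Abelian group of cardinality $\kappa$ a group topology without non-trivial convergent sequences whose every finite power is countably compact. In particular the topology itself is countably compact, which is what is needed for this direction.

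For the \emph{only if} direction, suppose the free Abelian group of infinite cardinality~$\kappa$ admits a countably compact Hausdorff group topology. Van~Douwen's theorem forbids the cardinality of such a group from being a strong limit cardinal of countable cofinality. Under GCH, an infinite cardinal is a strong limit iff it equals $\omega$ or is a limit cardinal; and since successor cardinals are regular, every cardinal of countable cofinality is either $\omega$ or a limit cardinal. Thus van~Douwen's exclusion rules out every $\kappa$ with $\cf(\kappa)=\omega$, and so $\cf(\kappa)>\omega$. A standard cardinal-arithmetic computation then closes the argument: since $\cf(\kappa)>\omega$, every function $\omega\to\kappa$ is bounded, so $\kappa^\omega=\bigcup_{\lambda<\kappa}\lambda^\omega$; and under GCH, $\lambda^\omega\leq 2^\lambda=\lambda^+\leq\kappa$ for each $\lambda<\kappa$. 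Hence $\kappa^\omega=\kappa$.

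All of the genuine difficulty is concentrated in the main theorem of the paper (the \emph{if} direction), which supplies the construction from $\cee$~incomparable selective ultrafilters; the remaining \emph{only if} step is a routine composition of van~Douwen's result with elementary cardinal arithmetic under GCH.
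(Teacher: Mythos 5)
Your proposal is correct and matches the paper's intended (but unwritten) argument precisely: the paper states the theorem immediately after quoting van~Douwen's result and prefaces it with ``Using the result in the abstract,'' so the \emph{if} direction is exactly the main theorem fed by the fact that CH (a consequence of GCH) yields $2^\cee$ pairwise RK-incomparable selective ultrafilters, and the \emph{only if} direction is van~Douwen's cardinality restriction combined with the GCH computation $\cf(\kappa)>\omega\Rightarrow\kappa^\omega=\kappa$. The only point worth noting is that the case $\kappa=\omega$ should be seen to fall under van~Douwen's exclusion (or be dismissed separately, since an infinite countably compact Hausdorff group has size at least $\cee$); with that observation your proof is complete and essentially identical to what the authors intend.
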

	
The result above answers a question of Dikranjan and Shakhmatov that was 
posed in the survey by Comfort, Hoffman and Remus~\cite{comfort&hofmann&remus}.

Because of the way our examples are constructed we can raise their weights
in the same way as in the papers~\cite{tomita4} 
or~\cite{castro-pereira&tomita2010} and obtain the following result -- 
the examples in these references are Boolean but the trick is similar.

\begin{teo} 
It is consistent that there is a proper class of cardinals of countable 
cofinality that can occur as the weight of a countably compact free Abelian 
group.
\end{teo}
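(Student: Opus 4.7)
I would work in a model of $\mathrm{ZFC}+\mathrm{GCH}$, so that the hypothesis of the abstract's main theorem ($\cee$ incomparable selective ultrafilters) is available. Under GCH every uncountable regular cardinal $\kappa$ satisfies $\kappa^\omega=\kappa$, so that theorem supplies a countably compact Hausdorff group topology on the free Abelian group $F_\kappa$ of rank $\kappa$, with all finite powers countably compact and no non-trivial convergent sequences.

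To produce a proper class of cardinals of countable cofinality as weights, I would adapt the weight-raising trick of \cite{tomita4,castro-pereira&tomita2010}. Let $\lambda$ be any cardinal with $\cf(\lambda)=\omega$ which is not a strong limit (as van Douwen's obstruction requires), and fix a regular $\kappa\geq\lambda$. The plan is to rerun the transfinite recursion that proves the main theorem, but now enlarging the target of the continuous characters from $\T^\kappa$ to a subspace of $\T^\lambda$ by adjoining $\lambda$-many additional characters into $\T$, woven into the bookkeeping of the selective-ultrafilter arguments. The refined topology is to have weight exactly~$\lambda$, keep every finite power countably compact, and still admit no non-trivial convergent sequences. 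Since GCH supplies such $\lambda$ cofinally in the ordinals, the collection of weights obtained is a proper class.

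The main technical obstacle I expect is that the original recursion is tuned to produce a free Abelian group as the limit of a $\kappa$-chain of subgroups of $\T^\kappa$; adjoining $\lambda$-many further coordinates must not destroy either the freeness of the limit or the selective-ultrafilter argument forcing accumulation points of enumerated sequences to lie in $F_\kappa$. In the Boolean setting of \cite{tomita4,castro-pereira&tomita2010} this is routine because $\{0,1\}^\lambda$ already carries the desired group structure; in the free Abelian case one has to verify that the torsion-freeness and freeness arguments are insensitive to the size of the character target, which is the observation the authors signal with the remark that ``the trick is similar''.
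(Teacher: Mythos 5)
There is a genuine confusion in your sketch. Van Douwen's obstruction concerns the \emph{cardinality} of a countably compact group, not its \emph{weight}; the whole content of the statement you are proving is that weights of countable cofinality can occur even though, under GCH, cardinalities of countable cofinality cannot. You impose the constraint that $\lambda$ --- the intended weight --- should not be a strong limit ``as van Douwen's obstruction requires,'' but no such requirement is forced on the weight. Worse, in the GCH model you choose to work in, that constraint is unsatisfiable: a cardinal $\lambda$ with $\cf(\lambda)=\omega$ is a limit cardinal (successor cardinals are regular), and under GCH every limit cardinal is a strong limit. So the class of $\lambda$'s passing your filter is empty, not a proper class, and your argument yields nothing.

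The cardinal arithmetic is also off. You take $\kappa\geq\lambda$ for the group cardinality and then propose to ``adjoin $\lambda$-many additional characters'' to raise the weight; but $\kappa+\lambda=\kappa$, so this neither raises the weight past $\kappa$ nor cuts it down to $\lambda$. Under GCH, a group of size $\kappa$ with $\kappa^\omega=\kappa$ can have weight a singular $\lambda$ of countable cofinality only when $\kappa=\lambda^+$, and the point of the modification is to arrange that the topology is generated by only $\lambda$-many separating characters (hence embeds in $\T^\lambda$); the estimate $2^\mu\leq\lambda<\kappa$ for $\mu<\lambda$ then pins the weight to exactly $\lambda$. That is a reduction in the size of the character family relative to the $\kappa$-many homomorphisms $\phi_{d,r}$ the base construction uses, not the ``enlarging'' you describe. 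For what it is worth, the paper itself does not supply a proof of this theorem either --- it only notes that the Boolean weight trick of the two cited references ``is similar'' --- so the verification you gesture at in your last paragraph is precisely what is left open, but your GCH/van Douwen framing would have to be repaired before it could even get started.
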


\subsection{Basic results, notation and terminology}
	
We recall that a topological space is \emph{countably compact} if, 
and only if, every countable open cover of it has a finite subcover. 
	
\begin{defin}\label{defin_p-limit}
Let $\calU$ be a filter on~$\omega$ and let $(x_n : n \in \omega)$ be 
a sequence in a topological space~$X$. 
We say that $x \in X$ is a \emph{$\calU$-limit point} 
of $(x_n : n \in \omega)$ if, 
for every neighborhood~$U$ of~$x$, 
the set $\{n \in \omega : x_n \in U\}$ belongs to~$\calU$.
		
If $X$ is Hausdorff, every sequence has at most one $\calU$-limit
and we write $x =\Ulim (x_n : n \in \omega)$ in that case.
\end{defin}
	
The set of all free ultrafilters on~$\omega$ is denoted by~$\omega^*$. 
The following proposition is a well known result on ultrafilter limits.
	
\begin{prop}
A topological space  is countably compact if and only if each sequence 
in it has a $\calU$-limit point for some $\calU\in \omega^*$.
\end{prop}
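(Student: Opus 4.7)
The plan is to use the standard characterization that countable compactness of $X$ is equivalent to every sequence in $X$ having a cluster point, and then to translate between cluster points and $\calU$-limit points.

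First, I would establish (or simply recall) the equivalence: $X$ is countably compact iff every sequence $(x_n)$ in $X$ has a cluster point, i.e.\ a point $x$ such that $\{n\in\omega : x_n\in U\}$ is infinite for every neighborhood $U$ of $x$. One direction comes from applying countable compactness to the countable family of closed sets $F_k=\overline{\{x_n : n\geq k\}}$, which has the finite intersection property and whose common point is a cluster point. The converse is a standard extraction argument: if $\{U_n : n\in\omega\}$ were a countable open cover with no finite subcover, pick $x_n\notin U_0\cup\dots\cup U_n$; any cluster point of $(x_n)$ would have to lie in some $U_k$, contradicting the construction.

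For the forward direction of the proposition, assume $X$ is countably compact and let $(x_n : n\in\omega)$ be a sequence. By the cluster-point characterization, fix a cluster point~$x$. Consider
\[
\calA=\bigl\{\{n\in\omega : x_n\in U\} : U\text{ a neighborhood of }x\bigr\}.
\]
Each member of $\calA$ is infinite, and $\calA$ is closed under finite intersection (intersect the corresponding neighborhoods of~$x$). Hence $\calA$ together with the Fr\'echet filter has the finite intersection property and extends to a free ultrafilter $\calU\in\omega^*$. By construction, for every neighborhood~$U$ of~$x$, the set $\{n : x_n\in U\}$ belongs to~$\calU$, so $x$ is a $\calU$-limit point of the sequence.

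For the converse, assume that every sequence has a $\calU$-limit point for some $\calU\in\omega^*$. Given a sequence $(x_n)$ and such a~$\calU$ with $\calU$-limit point~$x$, for each neighborhood~$U$ of~$x$ the set $\{n : x_n\in U\}$ lies in~$\calU$; since $\calU$ is free, every member of~$\calU$ is infinite, so $x$ is a cluster point of~$(x_n)$. By the cluster-point characterization, $X$ is countably compact. No step here presents a real obstacle; the only mild point of care is ensuring that the ultrafilter produced in the forward direction is \emph{free}, which is handled by adjoining the Fr\'echet filter before extending.
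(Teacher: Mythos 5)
Your proof is correct. The paper itself offers no argument for this proposition, merely citing it as a well-known fact about ultrafilter limits, so there is no paper proof to compare against; your write-up is the standard argument. Both directions are sound: the passage through cluster points is the usual characterization of countable compactness, the family $\calA$ indeed has the finite intersection property together with the Fr\'echet filter (finite intersections of neighborhoods of the cluster point~$x$ are again neighborhoods of~$x$, so the corresponding index sets are infinite, and removing a finite set keeps them infinite), and the resulting ultrafilter is free precisely because it extends the Fr\'echet filter. The converse correctly uses that members of a free ultrafilter are infinite to recover a cluster point. Nothing is missing.
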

	
The concept of almost disjoint families will be useful in our construction.

\begin{defin}
An almost disjoint family is an infinite family~$\calA$  of infinite
subsets of~$\omega$ such that distinct elements of~$\calA$
have a finite intersection. 
\end{defin}
	
It is well known that there exists an almost disjoint family of size 
continuum (see \cite{kunen}).

\begin{defin}
The unit circle group $\T$ will be the metric 
group $(\R / \Z, \delta)$ where the metric $\delta$ is 
given by 
$$
\delta(x + \Z, y + \Z) = \min\{|x - y + a| : a \in \Z\}
$$
for every $x, y \in \R$.
		
Given an open interval $(a, b)$ of $\R$ with $a<b$, we let $\delta((a, b))=b-a$.
		
An arc of $\T$ is a set of the form $I+\Z=\{a+\Z: a \in I\}$, 
where $I$ is an open interval of~$\R$. 
An~arc is said to be proper if it is distinct from $\T$.
		
If $U$ is a proper arc and $U=\{a+\Z: a \in I\}=\{b+\Z: a \in J\}$, then 
the Euclidean length of~$I$ equals the Euclidean length of~$J$, and we define 
the length of~$U$ as $\delta(U)=\delta(I)$. 
We also let $\delta(\T)=1$.
\end{defin}

Given an arc~$U$ such that $\delta(U) \leq \frac{1}{2}$, it follows 
that $\diam_\delta U=\delta(U)$. 
	
Our free Abelian groups will all be represented as directs sums of copies
of the group of integers~$\Z$; we fix some notation.
The additive group of rationals will also be used, so in the following
definition one should read $\Z$ or~$\Q$ for~$G$.	
	
\begin{defin}
If $f$~is a map from a set~$X$ to a group~$G$ then the \emph{support} of~$f$,
denotes $\supp f$ is defined to be the set $\{x \in X : f(x) \neq 0\}$. 

We define $G^{(X)}=\{f \in G^X : |\supp f| < \omega\}$.
		
If $Y$ is a subset of $X$ then, as an abuse of notation, 
we often write $G^{(Y)}=\{x \in G^{(X)}: \supp x \subseteq Y\}$.
		
Given $x \in X$, we denote by $\chi_x$ the characteristic function of~$\{x\}$,
whose support is~$\{x\}$ and which value~$\chi_x (x)=1$. 

For a sequence $\zeta: \omega\to X$ in~$X$ we define 
$\chi_\zeta: \omega\to G^X$ by $\chi_\zeta(n)=\chi_{\zeta(n)}$.

Finally, for $x \in X$, we let $\vec{x}: \omega\to X$ be
the constant sequence with value~$x$.
\end{defin}
	
Note that then $\chi_{\vec x}$ is also constant, with value~$\chi_x$.
	
\begin{defin}
Let $\calU$ be a filter on $\omega$ and $X$ a set. 
We say that the sequences  $f, g \in X^{\omega}$ 
are \emph{$\calU$-equivalent} and write $f\equiv_\calU g$ 
iff $\{n \in \omega : f(n) = g(n)\} \in \calU$.
\end{defin}

It is easy to verify that $\equiv_\calU$ is an equivalence relation. 
We denote the equivalence class of $f \in X^\omega$ by $[f]_\calU$. 
We also denote the set of all equivalence classes by $X^\omega/ \calU$.
		
If $R$ is a ring and $X$ is an $R$-module, then $X^\omega/\calU$ has 
a natural $R$-module structure given by 
$[f]_\calU+[g]_\calU=[f+g]_\calU$, 
$[-f]_\calU=-[f]_\calU$, 
$r\cdot[f]_\calU=[r\cdot f]_\calU$ and 
the class of the zero function as its zero~element.
		
If $p$ is a free ultrafilter, then the \emph{ultrapower} of the $R$-module~$X$ 
by~$p$ is the $R$-module $X^\omega/p$.
	
For the rest of this paper we will fix a cardinal number $\kappa$ that 
satisfies $\kappa^\omega=\kappa$.

Throughout this article, we will work inside ultrapowers of~$\Q^{(\kappa)}$. 
These ultrapowers contain copies of ultrapowers of~$\Z^{(\kappa)}$, which will 
be useful for the construction. 
So it is useful to define some notation.
	
\begin{defin}
Let $p$ be a free ultrafilter on~$\omega$. 
We define $\ult(\Q,p)$ as the $\Q$-vector space $(\Q^{(\kappa)})^\omega/p$ 
and $\ult(\Z, p)=\{[g]_p: g \in \Z^\omega\}$ with the subgroup structure. 
\end{defin}

Notice that each $[g]_p$ in~$\ult(\Z, p)$ is formally an element 
of $(\Q^{(\kappa)})^\omega/p$, not of $(\Z^{(\kappa)})^\omega/p$.
Nevertheless it is clear that $(\Z^{(\kappa)})^\omega/p$ is isomorphic 
to~$\ult(\Z, \kappa)$ via the obvious isomorphism that carries the equivalence 
class of a sequence $g \in (\Z^{\kappa})^\omega$ in~$(\Z^{(\kappa)})^\omega/p$ 
to its class in $(\Q^{(\kappa)})^\omega/p$.

\section{Selective Ultrafilters}

In this section we review some basic facts about selective ultrafilters, 
the Rudin-Keisler order and some lemmas we will use in the next sections.

\begin{defin}
A selective ultrafilter (on $\omega$), also called Ramsey ultrafilter, 
is a free ultrafilter~$p$ on~$\omega$ with the property that for every 
partition $(A_n: n \in \omega)$ of~$\omega$, either there exists~$n$ such 
that $A_n \in p$ or there exists $B \in p$ such 
that $|B\cap A_n|=1$ for every $n \in \omega$. 
\end{defin}
	
The following proposition is well known. We provide \cite{jech} as a reference.

\begin{prop}
Let $p$ be a free ultrafilter on $\omega$. Then the following are equivalent:
\begin{enumerate}[label=\alph*)]
\item $p$ is a selective ultrafilter,
\item for every $f \in \omega^\omega$, there exists $A \in p$ such 
      that $f$ is either constant or one-to-one on~$A$,
\item for every function $f:[\omega]^2\to 2$ there 
      exists $A \in p$ such that $f$~is constant on~$[A]^2$.
\end{enumerate}
\end{prop}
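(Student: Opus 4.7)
The plan is to establish the pairwise equivalences by noting that $(a) \Leftrightarrow (b)$ and $(c) \Rightarrow (a)$ are routine while $(a) \Rightarrow (c)$ is the substantive step.

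For $(a) \Rightarrow (b)$, given $f : \omega \to \omega$, apply (a) to the partition of $\omega$ into the nonempty fibers of $f$ (reducing trivially to this case when the range is finite, since then some fiber is already forced into $p$): either a fiber is in $p$, giving constancy of $f$, or $p$ contains a selector meeting each fiber in exactly one point, giving injectivity. For $(b) \Rightarrow (a)$, given a partition $(A_n)_{n \in \omega}$, define $f : \omega \to \omega$ by $f(k) = n$ iff $k \in A_n$ and apply (b): constancy on some $A \in p$ forces $A \subseteq A_n$ and therefore $A_n \in p$, while injectivity gives $|A \cap A_n| \le 1$ for every $n$, and $A$ can be enlarged to an exact selector by adjoining one element of each $A_n$ disjoint from it. The proof of $(c) \Rightarrow (a)$ is analogous: given a partition with no block in $p$, color $[\omega]^2 \to 2$ by ``same block vs.\ different block''; a color-$0$ monochromatic set in $p$ would lie inside a single block (contradiction), so (c) delivers a color-$1$ set meeting each block at most once, which is then enlarged as before.

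The substantive direction is $(a) \Rightarrow (c)$. Given $f : [\omega]^2 \to 2$, the ultrafilter assigns to each $n$ the unique color $c(n) \in \{0, 1\}$ for which the set $H_n = \{m > n : f(\{n, m\}) = c(n)\}$ lies in $p$. Applying the already-established (b) to $c$, constancy is forced (the range is finite, so injectivity on an infinite set is impossible), yielding $A_0 \in p$ on which, say, $c \equiv 0$; in particular $H_n \in p$ for every $n \in A_0$. The aim is then to find $A \subseteq A_0$ with $A \in p$ and $A \setminus [0, n] \subseteq H_n$ for every $n \in A$, since any such $A$ is $f$-monochromatic of color $0$.

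This final extraction is the main obstacle. Since $p$ is selective it is simultaneously a P-point and a Q-point. P-pointness produces $D \in p$ with $D \subseteq^* H_n$ for every $n \in A_0$; letting $\psi(n)$ be the running maximum of the finitely many elements of $D$ falling outside $H_n$, the function $\psi$ is non-decreasing and satisfies $D \cap (\psi(n), \infty) \subseteq H_n$ for each $n \in A_0$. Iterating $\psi$ along the increasing enumeration of $D$ yields a partition of $D$ into finite blocks in which successive blocks are separated by more than the $\psi$-threshold of their predecessors' elements. Q-pointness then extracts $A \in p$ meeting each block in at most one point, and a direct check of the block boundaries shows that any $x < y$ in $A$ lie in distinct blocks with $y > \psi(x)$, forcing $f(\{x, y\}) = 0$ and making $A$ monochromatic of color $0$.
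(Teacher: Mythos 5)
The paper does not prove this proposition; it cites Jech and treats it as well known, so there is no in-paper proof to compare with, and what follows assesses your argument on its own terms. Your reductions $(a)\Leftrightarrow(b)$ and $(c)\Rightarrow(a)$ are correct, as is the first half of $(a)\Rightarrow(c)$ (defining $c(n)$, $H_n$, and passing to $A_0\in p$ on which $c$ is constant), and passing through the P-point and Q-point properties is a legitimate route since a free ultrafilter is selective if and only if it is both. The gap is in the final block construction. With $D\in p$ a pseudo-intersection of $(H_n)_{n\in A_0}$ and a non-decreasing threshold $\psi$ satisfying $D\cap(\psi(n),\infty)\subseteq H_n$, you assert that iterating $\psi$ along $D$ yields a partition of $D$ into finite consecutive blocks $B_0,B_1,\dots$ with $\min B_{k+1}>\psi(d)$ for every $d\in B_k$. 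Such a partition need not exist: since the blocks exhaust $D$, $\min B_{k+1}$ is simply the element of $D$ following $\max B_k$, and nothing makes it exceed $\psi(\max B_k)$ (take $D$ a tail of $\omega$ and $\psi(n)=n+2$, for instance). The iteration $e_{k+1}=\min\{d\in D:d>\psi(e_k)\}$, $B_k=[e_k,e_{k+1})\cap D$, only gives $\min B_{k+1}>\psi(\min B_k)$; since $x\in B_k$ can have $\psi(x)$ as large as $\psi(\psi(e_k))$, the needed inequality $y>\psi(x)$ can fail when $y$ lies in the immediately following block, and the ``direct check of the block boundaries'' you invoke does not go through.

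The standard repair is to skip a block. From $\max B_k<e_{k+1}$ and monotonicity of $\psi$ one gets $\psi(\max B_k)\le\psi(e_{k+1})<e_{k+2}$, so each $B_k$ is separated in the required sense from $B_{k+2},B_{k+3},\dots$, though not necessarily from $B_{k+1}$. One of $\bigcup_kB_{2k}$ and $\bigcup_kB_{2k+1}$ lies in $p$; apply Q-pointness to the partition of that set into its constituent blocks to obtain the selector $A\in p$. Any $x<y$ in $A$ then lie in blocks whose indices differ by at least two, so if $x\in B_k$ then $y\ge e_{k+2}>\psi(\max B_k)\ge\psi(x)$, hence $y\in H_x$ and $f(\{x,y\})=0$. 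With this one modification your argument is complete; the gap is real but local and easily patched.
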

	
The Rudin-Keisler order is defined as follows:

\begin{defin}
Let $\calU$ be a filter on $\omega$ and $f:\omega\to \omega$. 
We define $f_*(\calU)=\{A \subseteq \omega: f^{-1}[A] \in \calU\}$.
\end{defin}
	
It is easy to verify that $f_*(\calU)$ is a filter; 
if $\calU$ is an ultrafilter then so is $f_*(\calU)$; 
if $f, g:\omega\to \omega$, then $(f\circ g)_*=f_*\circ g_*$; 
and $(\text{id}_\omega)_*$ is the identity over the set of all filters. 
This implies that if $f$ is bijective, then $(f^{-1})_*=(f_*)^{-1}$.

\begin{defin}
Let $\calU$ and $\calV$ be filters. 
We say that $\calU\leq \calV$ 
(or $\calU\leq_{\text{RK}} \calV$, if we need to be clear) 
iff there exists $f \in \omega$ such that $f_*(\calU)=\calV$.
		
The \textit{Rudin-Keisler order} is the set of all free ultrafilters 
over~$\omega$ ordered by~$\leq_{\text{RK}}$. 
We say that two ultrafilters $p$ and~$q$ are equivalent 
iff $p\leq q$ and $q\leq p$.
\end{defin}
	
It is easy to verify that $\leq$ is a preorder and that the equivalence 
defined above is indeed an equivalence relation. 
Moreover, the equivalence class of a fixed ultrafilter is the set of all 
fixed ultrafilters, so the relation restricts to~$\omega^*$ without 
modifying the equivalence classes. 
We refer to \cite{jech} for the following proposition:
	
\begin{prop}The following are true:
\begin{enumerate}
\item If $p$ and $q$ are ultrafilters, then $p\leq q$ and $q\leq p$ is 
      equivalent to the existence of a bijection $f:\omega\to \omega$ 
      such that $f_*(p)=q$.
\item The selective ultrafilters are exactly the minimal elements of the 
      Rudin-Keisler order.
\end{enumerate}
\end{prop}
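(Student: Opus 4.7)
The plan is to reduce both parts to the following fixed-point lemma: for every free ultrafilter $p$ on $\omega$ and every $h : \omega \to \omega$, if $h_*(p) = p$ then $\{n \in \omega : h(n) = n\} \in p$. I would prove this first via Kat\v{e}tov's theorem that any fixed-point-free self-map of $\omega$ admits a partition $\omega = A_0 \cup A_1 \cup A_2$ with $A_i \cap h^{-1}[A_i] = \varnothing$ for each $i$: assuming toward a contradiction that $\{n : h(n) \neq n\} \in p$ and restricting to that set so that $h$ is fixed-point-free, the unique $A_i \in p$ from the partition satisfies $h^{-1}[A_i] \in p$ since $A_i \in p = h_*(p)$, contradicting $A_i \cap h^{-1}[A_i] = \varnothing$.

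For part (1), the direction from a bijection to mutual Rudin--Keisler dominance is immediate via the inverse map. For the converse, given $f_*(p) = q$ and $g_*(q) = p$, the formula $(g \circ f)_* = g_* \circ f_*$ stated in the excerpt gives $(g \circ f)_*(p) = p$, so the lemma produces $B = \{n : g(f(n)) = n\} \in p$, on which $f$ is injective. After a routine adjustment---shrinking $B$ by finitely or countably many elements so that $\omega \setminus B$ and $\omega \setminus f[B]$ share the same cardinality while $B$ remains in the free ultrafilter $p$---the partial bijection $f|_B$ extends to a bijection $\bar h : \omega \to \omega$ witnessing $\bar h_*(p) = q$.

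For part (2), I would exploit the characterisations of selectivity stated just above. Assuming $p$ is selective and $q = f_*(p) \leq_{\mathrm{RK}} p$ is free, characterisation (b) yields $A \in p$ on which $f$ is either constant or injective; the constant case would make $q$ principal, so $f|_A$ is injective, and any $g$ inverting $f|_A$ witnesses $p \leq_{\mathrm{RK}} q$. Conversely, suppose $p$ is minimal and take any $f : \omega \to \omega$, setting $q = f_*(p)$. If $q$ is principal at some $n$, then $f$ is constant on $f^{-1}[\{n\}] \in p$; otherwise $q$ is free, and minimality combined with part (1) yields a bijection $\bar h$ with $\bar h_*(q) = p$, so the fixed-point lemma applied to $\bar h \circ f$ delivers $A \in p$ on which $f$ is injective.

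The only genuinely non-routine ingredient is the fixed-point lemma, and hence Kat\v{e}tov's three-colouring theorem; everything else is direct manipulation of push-forward filters. The mild technical point in part (1)---extending the partial bijection $f|_B$ to a bijection of $\omega$---causes no trouble because the freeness of $p$ lets $B$ be perturbed by any finite or countable amount while staying in $p$, so the cardinality match between $\omega \setminus B$ and $\omega \setminus f[B]$ is always achievable.
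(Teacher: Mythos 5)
The paper does not prove this proposition; it simply refers to \cite{jech} and moves on, so there is no argument of the paper's own to compare against. Your blind proof is correct, and it is essentially the standard textbook argument that the reference itself uses: part~(1) is reduced to the fixed-point lemma for ultrafilters (if $h_*(p)=p$ for free $p$ then $\{n:h(n)=n\}\in p$), derived from Kat\v{e}tov's three-colouring theorem, and part~(2) combines that same lemma with the equivalence between selectivity and clause~(b) of the preceding characterisation. Two small points deserve tightening. First, ``restricting to that set so that $h$ is fixed-point-free'' is imprecise: $h$ restricted to $B=\{n:h(n)\neq n\}$ is not a self-map of $B$, so one should instead \emph{modify} $h$ off $B$ (e.g.\ send each fixed point $n$ to $n+1$) to get a fixed-point-free self-map of $\omega$ to which Kat\v{e}tov applies; since $p$ concentrates on $B$, where the original and modified maps agree, the contradiction goes through unchanged. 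Second, ``shrinking $B$ by finitely or countably many elements'' should read ``shrinking $B$ to some $B'\in p$ with $B\setminus B'$ infinite'': as $p$ is free one may split $B$ into two infinite halves and keep the one lying in $p$, after which $\omega\setminus B'$ and $\omega\setminus f[B']$ are both countably infinite (for the latter, injectivity of $f$ on $B$ gives $f[B\setminus B']\subseteq\omega\setminus f[B']$), so $f|_{B'}$ extends to the desired bijection. Neither issue affects the validity of the overall argument.
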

	
This implies that if $f:\omega\to \omega$ and $p$~is a selective 
ultrafilter, then $f_*(p)$ is either a fixed ultrafilter or a 
selective ultrafilter. 
If $f_*(p)$ is the ultrafilter generated by~$n$, then $f^{-1}[\{n\}] \in p$, 
so, in particular, if $f$~is finite to one and $p$~is selective, 
then $f_*(p)$~is a selective ultrafilter equivalent to~$p$.
	
The existence of selective ultrafilters is independent from ZFC. 
Martin's Axiom for countable orders implies the existence of $2^\cee$~pairwise 
incomparable selective ultrafilters in the Rudin-Keisler order.

The lemma below appears in \cite{tomita3}.
	
\begin{lem}\label{countable.ult}
Let $(p_k: k \in \omega)$ be a family of pairwise incomparable 
selective ultrafilters. 
For each $k$ let $(a_{k, i}: i \in \omega)$ be a strictly increasing 
sequence in~$\omega$ such that $\{a_{k,i}: i \in \omega\} \in p_k$ 
and $i<a_{k, i}$ for all $i \in \omega$. 
Then there exists $\{I_k: k \in \omega \}$ such that:
\begin{enumerate}[label=\alph*)]
\item $\{ a_{k,i}: i \in I_k\} \in p_k$, for each $k \in \omega$.
\item $I_j\cap I_j=\emptyset$ whenever $i, j \in \omega$ and $i\neq j$, and 
\item $\{[i,a_{k,i}]:i\in I_k$ and $k\in\omega\}$ is a 
      pairwise disjoint family.
\end{enumerate}
\end{lem}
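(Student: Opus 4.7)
The plan is to transfer to the ``index-side'' ultrafilters and then carry out two successive refinements. Define $b_k\colon\omega\to\omega$ by $b_k(i)=a_{k,i}$; since $b_k$ is strictly increasing and $b_k[\omega]\in p_k$, the formula $q_k:=\{B\subseteq\omega:b_k[B]\in p_k\}$ defines an ultrafilter on $\omega$ with $(b_k)_*(q_k)=p_k$. The map $b_k$ is finite-to-one, so $q_k$ is RK-equivalent to $p_k$ and is itself selective, and pairwise RK-incomparability of the $p_k$ transfers to the $q_k$. Because $\{a_{k,i}:i\in I\}\in p_k$ iff $I\in q_k$, it is enough to produce $I_k\in q_k$ satisfying (b) and (c).

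\textbf{Phase 1: within-$k$ disjointness.} For each $k$, colour $\{i<j\}\in[\omega]^2$ by $0$ if $j>a_{k,i}$ and by $1$ otherwise. Characterisation (c) of selectivity applied to $q_k$ produces a monochromatic $J_k\in q_k$. A colour-$1$ homogeneous set would lie entirely inside $[\min J_k,a_{k,\min J_k}]$ and hence be finite, contradicting $J_k\in q_k$; so $J_k$ is colour-$0$ homogeneous, which is exactly the statement that the intervals $\{[i,a_{k,i}]:i\in J_k\}$ are pairwise disjoint.

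\textbf{Phase 2: across-$k$ disjointness --- the main obstacle.} Fix a bookkeeping enumeration $\langle k_n:n\in\omega\rangle$ in which every $k$ occurs infinitely often, and for each $k$ fix a $\subseteq$-descending cofinal sequence $(D_{k,m})_{m\in\omega}$ in $q_k$ refining $J_k$ (available because selective ultrafilters are $P$-points). Inductively pick $t_n\in J_{k_n}\cap D_{k_n,m_n}\cap(R_n,\infty)$, where $m_n:=|\{j\le n:k_j=k_n\}|$ and $R_n:=\max\{a_{k_j,t_j}:j<n\}$; each of the three factors belongs to the free ultrafilter $q_{k_n}$, so the intersection is nonempty at every stage. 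Setting $I_k:=\{t_n:k_n=k\}$ immediately delivers (b) and (c), because the left endpoint of every new interval strictly exceeds every earlier right endpoint by construction.

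The real content of the lemma is clause (a): a bare greedy selection only makes each $I_k$ infinite. Enumerating $I_k=\{i^k_0<i^k_1<\cdots\}$ in increasing order, the construction ensures $i^k_m\in D_{k,m}$, so $I_k\subseteq^* D_{k,m}$ for every $m$; the delicate task is to choose the $(D_{k,m})$ coherently with the global recursion so that the combined $P$- and $Q$-point (i.e.\ selectivity) property of $q_k$ actually places the diagonally-built $I_k$ into $q_k$, rather than merely producing it as a pseudo-intersection that may fall outside the ultrafilter. The pairwise RK-incomparability of the $q_k$ enters precisely here: it prevents the $D_{k,m}$ for different $k$ from being forced to interlock in a way that would make the bookkeeping inconsistent. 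This coordinated countable fusion is the step I expect to require the most care.
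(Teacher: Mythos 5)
Note first that the paper does not prove this lemma in-text; it is quoted from \cite{tomita3}. Your transfer to the index-side ultrafilters $q_k$ and your Phase~1 (colouring $[\omega]^2$ and noting that a $1$-homogeneous set would be bounded) are correct and are the standard use of selectivity.

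Phase~2 does not establish clause~(a), and you say so yourself. The greedy recursion produces infinite sets $I_k$ satisfying~(b) and~(c), but there is no mechanism that places $I_k$ in $q_k$: the set you build is merely \emph{a} pseudo-intersection of the tower $(D_{k,m})_m$, and while the $P$-point property of $q_k$ guarantees that \emph{some} pseudo-intersection of that tower lies in $q_k$, it says nothing about the particular one your interleaved recursion happens to produce -- an arbitrary greedily built infinite set has no reason to land in a fixed ultrafilter, and could just as well fall in the dual ideal. Your final sentence, that RK-incomparability ``prevents the $D_{k,m}$ from interlocking in a way that would make the bookkeeping inconsistent,'' also misplaces the role of incomparability: the bookkeeping in your recursion is always consistent, since at each stage the chooser set is a finite intersection of members of the filter $q_{k_n}$ and hence infinite, so nothing ever breaks. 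What is actually needed, and not supplied, is a concrete combinatorial consequence of pairwise RK-incomparability for selective ultrafilters (for example, that such ultrafilters are pairwise non--nearly-coherent, so for every interval partition of $\omega$ one can find a union of blocks belonging to one of the two ultrafilters whose complement belongs to the other), applied together with selectivity and a diagonalisation over the countably many pairs, to build sets $I_k\in q_k$ from the outset rather than attempting to recognise a posteriori that a greedily constructed set belongs to the ultrafilter. As written, the core of the lemma -- clause~(a), which you correctly identify as ``the real content'' -- is announced but not proved.
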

	
In the course of the construction we will often use families of ultrafilters
indexed by~$\omega$ and finite sequences of infinite subsets of~$\omega$.
The following definition fixes some convenient notation.
	
\begin{defin}
A \emph{finite tower} in $\omega$ is a finite sequence $(A_0, \dots, A_{k-1})$
of infinite subsets of~$\omega$  such that $A_{t+1}\subseteq A_t$ 
for every~$t<k-1$. 
The set of all finite towers in~$\omega$ is called~$\calT$. 
If $T=(A_0, \dots, A_{k-1})$ then $l(T)=A_{k-1}$, the last term of 
the sequence $T$. 
For the empty sequence we write $l(\emptyset)=\omega$.
\end{defin}
	
\begin{lem}\label{ultrafilterenumeration} 
Assume there are $\cee$ incomparable selective ultrafilters. 
Then there is a family of incomparable selective ultrafilters 
$(p_{T,n}: T \in \calT, n \in \omega)$ such that $l(T) \in p_{T,n}$ 
whenever $T \in \calT$ and $n\in \omega$.
\end{lem}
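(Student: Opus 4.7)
The plan is to take $\omega$ many of the hypothesised incomparable selective ultrafilters and, one by one, twist them by bijections of~$\omega$ so that the twisted versions contain the prescribed sets $l(T)$ while remaining pairwise incomparable. To begin, I would enumerate the countable set $\calT\times\omega$ without repetition as $\{(T_k,n_k):k\in\omega\}$ and, using the hypothesis, fix a countable subfamily $(q_k:k\in\omega)$ of the $\cee$ pairwise incomparable selective ultrafilters.

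For each~$k$, I would construct a bijection $f_k:\omega\to\omega$ with $l(T_k)\in(f_k)_*(q_k)$, and then define $p_{T_k,n_k}:=(f_k)_*(q_k)$. Since $q_k$ is free and $l(T_k)$ is infinite, this is routine: if $l(T_k)$ already lies in~$q_k$, take $f_k=\text{id}_\omega$; otherwise choose any $C_k\in q_k$ whose complement is also infinite (which is possible since no free ultrafilter consists solely of cofinite sets) and let $f_k$ be any bijection of~$\omega$ sending $C_k$ onto $l(T_k)$, so that $f_k^{-1}[l(T_k)]=C_k\in q_k$ and hence $l(T_k)\in(f_k)_*(q_k)$. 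Because $f_k$ is bijective, $(f_k)_*(q_k)$ is Rudin-Keisler equivalent to~$q_k$ and is therefore itself a selective ultrafilter.

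The step I expect to need the most care is the verification of pairwise incomparability, even though the argument is short. Suppose, towards a contradiction, that $p_{T_i,n_i}\leq p_{T_j,n_j}$ with $i\neq j$, witnessed by some $g:\omega\to\omega$, i.e.\ $g_*((f_i)_*(q_i))=(f_j)_*(q_j)$. Combining the identity $g_*\circ(f_i)_*=(g\circ f_i)_*$ with the fact that $(f_j^{-1})_*=((f_j)_*)^{-1}$ for the bijection~$f_j$, one obtains
\[
(f_j^{-1}\circ g\circ f_i)_*(q_i)=q_j,
\]
whence $q_j\leq q_i$. Swapping the roles of $i$ and~$j$ gives the reverse inequality if the other direction of comparability holds, but either way $q_i$ and $q_j$ turn out to be comparable, contradicting the choice of the family $(q_k:k\in\omega)$. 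Hence the family $(p_{T,n}:T\in\calT,\,n\in\omega)$ has all the required properties.
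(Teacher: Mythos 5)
Your proposal is correct and follows essentially the same route as the paper: take a countable subfamily of the given incomparable selective ultrafilters, push each forward by a suitable injection so that the image contains the prescribed set $l(T)$, and observe that since each $p_{T,n}$ is Rudin--Keisler equivalent to its $q$, pairwise incomparability is preserved. The only cosmetic difference is that the paper simply takes a bijection $f_T\colon\omega\to l(T)$ (one per $T$, which is automatically finite-to-one and so preserves selectivity and the equivalence class), whereas you build a self-bijection of~$\omega$ carrying some $C_k\in q_k$ onto $l(T_k)$, which needs the small extra observation about infinite complements; both work equally well.
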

	
\begin{proof}
Index the $\cee$ incomparable selective ultrafilters faithfully as 
$\{q_{T,n}: T \in \calT, n\in \omega\}$. 
For each $T$, let $f_T:\omega \to l(T)$ be a bijection and 
define $p_{T, n}={f_T}_*(q_{T, n})$. 
Since $f$ is one-to-one, it follows that $p_{T,n}$ is a selective ultrafilter 
equivalent to~$q_{T,n}$. 
The family $(p_{T,n}: T \in \calT, n \in \omega)$ is as required.
\end{proof}
	
\section{Main Ideas}

From now on we fix a family $(p_{T, n}: n \in \omega, T\in \calT)$ 
of selective ultrafilters as provided by Lemma~\ref{ultrafilterenumeration}.

The idea will be to use these ultrafilters to assign $p$-limits to 
enough injective sequences in~$\Z^{(\kappa)}$ to ensure countable compactness
of the resulting topology.
We take some inspiration from~\cite{boero&castro-pereira&tomitaoneselective}
where a large independent family was used such that, up to a permutation
every injective sequence in~${\Z}^{(\cee)}$ was part of this family.
Since this group has cardinality~$\cee$, there were indeed enough permutations 
to accomplish this. 
For an arbitrarily large group, we shall consider large linearly independent 
pieces to make sure every sequence has an accumulation point. 
	
The following definition will be used to construct a witness for linearly 
independence in an ultraproduct that does not depend on the free ultrafilter.

\begin{defin} 
Let $\calF$ be a subset of~$({\Z}^{(\kappa)})^\omega$ 
and $A \in [\omega]^\omega$.
We shall call $\calF$ \emph{linearly independent mod~$A^*$} iff for 
every free ultrafilter~$p$ with $A \in p$ 
the set
$$
( [f]_p: f \in \calF) \dotcup ( [\chi_{\vec{\xi}}]_p : \xi < \kappa)
$$ 
is linearly independent in the $\Q$-vector space~$\ult(\Q, p)$, and 
if $[f]_p\neq [g]_p$ whenever $f$ and~$g$ are distinct elements 
of~$\calF$. 
\end{defin}

Notice that it is implicit in our definition that
$\{ [f]_p: f \in \calF\}$ and 
$\{ [\chi_{\vec{\xi}}]_p : \xi < \kappa\}$
are disjoint.
We will abbreviate ``linearly independent mod $A^*$'' to l.i.~mod~$A^*$.
	
An application of Zorn's Lemma will establish the following lemma. 
	
\begin{lem}\label{partial.li} 
Every set of sequences that is l.i.~mod~$A^*$ can be extended to a maximal 
linearly independent set mod~$A^*$.\qed
\end{lem}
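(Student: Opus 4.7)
The plan is to invoke Zorn's Lemma in the standard way. Let $\calF_0$ be the given family that is l.i.\ mod~$A^*$, and let $\mathcal{P}$ denote the poset of all families $\calF\subseteq(\Z^{(\kappa)})^\omega$ with $\calF\supseteq\calF_0$ that are l.i.\ mod~$A^*$, ordered by inclusion. Since $\calF_0\in\mathcal{P}$ the poset is nonempty, so once I verify that every chain in $\mathcal{P}$ has an upper bound in $\mathcal{P}$, Zorn's Lemma will produce a maximal element and the lemma will follow.

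The key observation is that the property ``l.i.\ mod~$A^*$'' is of finite character. For a fixed free ultrafilter $p$ with $A\in p$, a subset of the $\Q$-vector space $\ult(\Q,p)$ is linearly independent iff every finite subset of it is; and the auxiliary conditions that $[f]_p\neq[g]_p$ for distinct $f,g\in\calF$, and that the sets $\{[f]_p:f\in\calF\}$ and $\{[\chi_{\vec\xi}]_p:\xi<\kappa\}$ be disjoint, each involve only finitely many elements at a time. Hence, given a chain $\mathcal{C}\subseteq\mathcal{P}$, its union $\calF^*=\bigcup\mathcal{C}$ lies in $\mathcal{P}$: any finitely many sequences drawn from $\calF^*$ already lie in a single member $\calF\in\mathcal{C}$, and that $\calF$ satisfies all the required independence and distinctness conditions for every such~$p$.

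There is essentially no obstacle beyond this finite-character bookkeeping. The only mildly delicate point is that the definition of l.i.\ mod~$A^*$ quantifies simultaneously over \emph{all} free ultrafilters containing~$A$, so one must check the argument uniformly in~$p$; but this causes no trouble because the verification is carried out $p$ by $p$ without any interaction between different ultrafilters. Consequently the proof is a direct Zorn's Lemma argument and, fittingly, the statement in the paper is followed by \verb|\qed| with no written proof.
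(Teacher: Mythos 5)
Your proposal is correct and matches the paper's intent exactly: the paper itself says only ``An application of Zorn's Lemma will establish the following lemma'' and marks the statement with~\verb|\qed|, leaving the standard finite-character/chain-union verification (which you spell out, correctly and uniformly in~$p$) to the reader.
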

	
It should be clear that $A\subseteq B\subseteq \omega$ and $A$ and~$B$ are 
infinite, then a set that is l.i.~mod~$B^*$ is also l.i.~mod~$A^*$. 
Then by using recursion, this easily implies the following corollary:

\begin{cor}\label{baseexistence}
There exists a family $(\calE_T: T \in \calT)$ such that:
\begin{enumerate}
\item For every $T \in \calT$ the set~$\calE_T$ is maximal
      l.i.~mod~$l(T)^*$, and
\item For every $T \in \calT$, if $n\leq |T|$ then 
      $\calE_{T|n}\subseteq \calE_T$.
\end{enumerate}
\end{cor}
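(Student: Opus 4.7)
The plan is to construct the family $(\calE_T : T \in \calT)$ by recursion on the length $|T|$ of the tower, using Lemma~\ref{partial.li} at each stage together with the monotonicity observation that precedes the corollary. The entire argument is essentially a bookkeeping exercise on top of Zorn's lemma, so no step should present a real obstacle.

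For the base case $|T|=0$, that is $T=\emptyset$, we have $l(\emptyset)=\omega$, and the empty set is vacuously l.i.~mod~$\omega^*$; Lemma~\ref{partial.li} then yields a maximal l.i.~mod~$\omega^*$ set $\calE_\emptyset$.

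For the inductive step, suppose $\calE_S$ has been defined satisfying both conditions (1) and (2) for every tower $S$ with $|S|\leq k$, and let $T$ be a tower with $|T|=k+1$. Write $T=(S,A)$ where $S=T|k$ and $A=l(T)$, so $A\subseteq l(S)$. By the monotonicity remark preceding the corollary, the fact that $\calE_S$ is l.i.~mod~$l(S)^*$ implies that $\calE_S$ is l.i.~mod~$A^*=l(T)^*$. Apply Lemma~\ref{partial.li} to extend $\calE_S$ to a maximal l.i.~mod~$l(T)^*$ family $\calE_T$. Property~(1) holds by construction, and property~(2) follows because $\calE_{T|k}=\calE_S\subseteq \calE_T$, while the inclusions $\calE_{T|n}\subseteq \calE_{T|(n+1)}$ for $n<k$ are given by the inductive hypothesis applied to $S$; transitivity then yields $\calE_{T|n}\subseteq \calE_T$ for every $n\leq k+1$.

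The only point that deserves a moment of thought is the direction of the monotonicity of the predicate l.i.~mod~$A^*$. If $A\subseteq B$, then every free ultrafilter containing $A$ also contains $B$, so $\{p\in\omega^*: A\in p\}\subseteq \{p\in\omega^*: B\in p\}$; consequently the condition l.i.~mod~$B^*$ quantifies over a larger set of ultrafilters and is therefore stronger, and it implies l.i.~mod~$A^*$. This is exactly the direction we need in the inductive step, where $A=l(T)$ is contained in $B=l(S)$.
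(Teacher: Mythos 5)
Your proof is correct and follows exactly the route the paper intends: the paper itself only gestures at the argument (``It should be clear that [if] $A\subseteq B\subseteq\omega$ \dots then a set that is l.i.~mod~$B^*$ is also l.i.~mod~$A^*$. Then by using recursion, this easily implies the following corollary''), and your write-up is the straightforward recursion on $|T|$ using Lemma~\ref{partial.li} plus that monotonicity remark, with the direction of the implication checked correctly.
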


We note explicitly that even though $\calE_T$ is only demanded to be maximal
l.i.~mod~$l(T)^*$ it will, because of item~(2), depend on all of~$T$, not just 
on~$l(T)$. 	
	
\begin{lem} \label{partial.generator} 
Let $g$ be an element of $({\Z}^{(\kappa)})^{\omega}$ and let 
$\calE\subseteq(\Z^{(\kappa)})^\omega$ be maximal l.i.~mod~$B^*$. 
Then there exist an infinite subset $A$ of~$B$, a finite subset~$E$ of~$\calE$,
a finite subset~$D$ of~$\kappa$, and sets $\{r_f : f \in E\}$ 
and $\{s_\nu: \nu \in D\}$ of rational numbers such that 
$$
g|_A=\sum_{f \in E}r_f\cdot f|_A +\sum_{\nu \in D} s_\nu \cdot \chi_{\vec \nu}|_A.
$$
\end{lem}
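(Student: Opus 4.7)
The plan is to exploit the maximality of $\calE$ directly. First, the trivial case $g \in \calE$ is handled by taking $E = \{g\}$, $r_g = 1$, $D = \emptyset$, and $A = B$. Assume then that $g \notin \calE$. Since $\calE$ is maximal l.i.~mod~$B^*$, the family $\calE \cup \{g\}$ is not l.i.~mod~$B^*$, so there is a free ultrafilter $p$ with $B \in p$ for which either the set $\{[f]_p : f \in \calE \cup \{g\}\} \dotcup \{[\chi_{\vec{\xi}}]_p : \xi < \kappa\}$ admits a nontrivial finite $\Q$-linear dependence in $\ult(\Q,p)$, or two distinct members of $\calE\cup\{g\}$ have the same $p$-class (which can be rewritten as a dependence of the form $[g]_p-[h]_p=0$, where $h$ is either some $f\in\calE$ or some $\chi_{\vec{\xi}}$). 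In either case I obtain a nontrivial finite relation
$$c \, [g]_p + \sum_{f \in E} c_f \, [f]_p + \sum_{\nu \in D} d_\nu \, [\chi_{\vec{\nu}}]_p = 0,$$
with $E \subseteq \calE$ and $D \subseteq \kappa$ finite and the rational coefficients not all zero.

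The key step is arguing that $c \neq 0$. If $c$ were zero, the displayed relation would exhibit a nontrivial dependence entirely inside $\{[f]_p : f \in \calE\} \dotcup \{[\chi_{\vec{\xi}}]_p : \xi < \kappa\}$ for an ultrafilter $p$ containing $B$, contradicting that $\calE$ itself is l.i.~mod~$B^*$. Hence $c\neq 0$, and dividing by $-c$ gives rationals $r_f = -c_f / c$ and $s_\nu = -d_\nu / c$ with
$$[g]_p = \sum_{f \in E} r_f \, [f]_p + \sum_{\nu \in D} s_\nu \, [\chi_{\vec{\nu}}]_p$$
in $\ult(\Q, p)$.

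To finish, I would unpack this ultrapower equality. By definition of $\equiv_p$, the set
$$A' = \bigl\{n \in \omega : g(n) = \sum_{f \in E} r_f f(n) + \sum_{\nu \in D} s_\nu \chi_{\vec{\nu}}(n)\bigr\}$$
belongs to $p$. Taking $A = A' \cap B$, which is also in $p$ and therefore infinite, yields an infinite $A \subseteq B$ on which the required identity $g|_A = \sum_{f \in E} r_f \, f|_A + \sum_{\nu \in D} s_\nu \, \chi_{\vec{\nu}}|_A$ holds pointwise. The only real obstacle is the verification that $c\neq 0$; the rest is a direct unpacking of the definitions of maximality, of the $\calU$-equivalence, and of operations in the ultrapower.
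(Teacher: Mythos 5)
Your proof is correct and takes essentially the same route as the paper: use maximality of $\calE$ to produce a free ultrafilter $p$ containing $B$ on which $\calE\cup\{g\}$ fails to be l.i., extract a rational relation with nonzero coefficient on $[g]_p$, and unpack the ultrapower equality to a set $A\in p$ contained in $B$. If anything you are slightly more explicit than the paper about why the coefficient of $[g]_p$ must be nonzero and about the degenerate ways (a collision with some $[f]_p$ or some $[\chi_{\vec\xi}]_p$) in which l.i.~mod~$B^*$ can fail.
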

	
\begin{proof}
If $g \in \calE$ or $g=\chi_{\vec \nu}$ for some $\nu<\kappa$, then we are done.
Otherwise, by the maximality of~$\calE$, there exists a free ultrafilter~$p$ 
with $B \in p$ such that  the set
$$
\{[g]_p\}\cup 
\{ [h]_p :\,h \in \calE \} \cup 
\{[\chi_{\vec \xi}]_p: \xi < \kappa \}
$$ 
is not linearly independent. 
		
This means that we can find finite subsets $E$ and $D$ of~$\calE$ and $\kappa$
respectively and finite sets $\{r_f : f \in E\}$ and 
$\{s_\nu: \nu \in D\}$ of rational numbers  such that 
$$
[g]_p=\sum_{f \in E}r_f\cdot[f]_p+\sum_{\nu \in D} s_\nu\cdot[\chi_{\vec \nu}]_p.
$$ 
Now choose $A\in p$ with $A\subseteq B$ that witnesses this equality.
\end{proof}
	
\begin{cor}
If $\calE\subseteq(\Z^{(\kappa)})^\omega$ is maximal l.i.~mod~$B^*$, 
then $|\calE|=\kappa$.
\end{cor}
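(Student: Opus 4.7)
The plan is to prove $|\calE|=\kappa$ by two matching inequalities. The upper bound $|\calE|\leq\kappa$ is immediate cardinal arithmetic: $|\Z^{(\kappa)}|=\kappa$ since its elements have finite support, so $|(\Z^{(\kappa)})^\omega|=\kappa^\omega=\kappa$ by our standing assumption on~$\kappa$. The real content is the lower bound $|\calE|\geq\kappa$, which I would prove by contradiction using Lemma~\ref{partial.generator}.

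Suppose towards a contradiction that $|\calE|<\kappa$. First note that $\kappa^\omega=\kappa$ forces $\kappa\geq 2^\omega$, so $\kappa$ is uncountable. For each $f\in\calE$ the set $S(f)=\bigcup_{n\in\omega}\supp f(n)$ is a countable subset of~$\kappa$, so $\bigl|\bigcup_{f\in\calE}S(f)\bigr|\leq |\calE|\cdot\aleph_0<\kappa$. I can therefore pick some coordinate $\xi<\kappa$ that lies outside every~$S(f)$. The test sequence to feed into Lemma~\ref{partial.generator} is $g\in(\Z^{(\kappa)})^\omega$ defined by $g(n)=(n+1)\chi_\xi$, whose $\xi$-coordinate is unbounded in~$n$.

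Lemma~\ref{partial.generator} then produces an infinite $A\subseteq B$, a finite $E\subseteq\calE$, a finite $D\subseteq\kappa$, and rationals $\{r_f:f\in E\}$ and $\{s_\nu:\nu\in D\}$ with
$$
g|_A=\sum_{f\in E}r_f\cdot f|_A+\sum_{\nu\in D}s_\nu\cdot\chi_{\vec\nu}|_A.
$$
The finishing move is to evaluate both sides at coordinate~$\xi$ for an arbitrary $n\in A$: every $f(n)(\xi)$ vanishes because $\xi\notin S(f)$, and $\chi_{\vec\nu}(n)(\xi)=\chi_\nu(\xi)$ equals $1$ when $\nu=\xi$ and $0$ otherwise. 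The right-hand side therefore collapses to a constant in~$n$ (equal to $s_\xi$ if $\xi\in D$ and to~$0$ otherwise), while the left-hand side equals $n+1$. Since $A$ is infinite, this is the desired contradiction.

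The only step requiring a moment's thought is the opening cardinal arithmetic, namely that $|\calE|<\kappa$ genuinely yields a coordinate~$\xi<\kappa$ missed by every $S(f)$; this is where the uncountability of~$\kappa$ (a consequence of $\kappa^\omega=\kappa$) is used. Once $\xi$ has been chosen, the clash between a constant right-hand side and an unbounded left-hand side is essentially automatic.
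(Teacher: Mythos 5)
Your proof is correct and follows essentially the same strategy as the paper's: bound $|\calE|$ above by $\kappa^\omega=\kappa$, then for the lower bound pick a coordinate outside $\bigcup_{f\in\calE}\bigcup_n\supp f(n)$ and feed a test sequence supported there into Lemma~\ref{partial.generator} to force a contradiction. The only cosmetic difference is the test sequence: the paper uses $g(n)=\chi_{\xi_n}$ for a fresh injective $\langle\xi_n\rangle$ and contradicts by choosing $k\in A$ with $\xi_k\notin D$, whereas you use $g(n)=(n+1)\chi_\xi$ on a single fresh coordinate and contradict by comparing a constant right-hand side with an unbounded left-hand side; both finish cleanly.
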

\begin{proof}
First notice that $|\calE|\le|(\Z^{(\kappa)})^\omega|=\kappa^\omega=\kappa$.
Assume $|\calE|<\kappa$.
Then the set $C=\bigcup\{\supp f(n): n \in \omega, f \in \calE\}$ has 
cardinality less than~$\kappa$.

Take some injective sequence $\langle \xi_n:n\in\omega\rangle$ 
in~$\kappa\setminus C$ and define $g: \omega\to \Z^{(\kappa)}$ by
$g(n)=\chi_{\xi_n}$ for all~$n$.
Clearly then $\bigcup\{\supp g(n): n \in \omega\}$ is disjoint from~$C$, 
all values of~$g$ are non-zero and the values have disjoint supports.

Apply Lemma \ref{partial.generator} to obtain sets $A$, $E$, $D$, 
$\{r_f:f\in E\}$, and $\{s_\nu:\nu\in D\}$ such that 
$$
g|_A=\sum_{f \in E}r_f\cdot f|_A +\sum_{\nu \in D} s_\nu\cdot \chi_{\vec \nu}|_A.
\eqno(*)
$$
Since $A$ is infinite and $D$ is finite, there is a $k\in A$ such 
that $\xi_k\notin D$. 
Now $f(k)(\xi_k)=0$ when $f\in E$ because $\xi_k\notin C$,
and $\chi_{\vec\nu}(k)(\xi_k)=0$ when $\nu\in D$ because $\xi_k\notin D$, and 
also $g(k)(\xi_k)=1$, which contradicts~$(*)$.
\end{proof}

Henceforth we fix a family $(\calE_T: T \in \calT)$ as in 
Corollary~\ref{baseexistence} and enumerate each $\calE_T$ faithfully 
as $\calE_T=\{f^T_\xi: \kappa \leq \xi < \kappa +\kappa\}$.
	
\begin{defin} 
For each $T \in \calT$ and $n\in \omega$, we denote by $G_{T,n}$ the
intersection of~$\ult(\Z, p_{T, n})$ and the free Abelian group generated 
by 
$\{ \frac{1}{n!}[f^T_\xi]_{{ p}_{T,n}}: \kappa \leq \xi <\kappa +\kappa \} \cup 
  \{\frac{1}{n!}[\chi_{\vec\xi}]_{{p}_{T,n}}: \xi <\kappa \}$.
\end{defin}

For the next lemma, we are going to use the following proposition:

\begin{prop}\label{propcyclicfuchs}
If $G$ is an abelian group and $H$ is a subgroup of $G$ such that $G/H$ is 
an infinite cyclic group, then there exists $a \in G$ such 
that $G=H\oplus\langle a\rangle$.
\end{prop}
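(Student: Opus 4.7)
The plan is to exhibit the desired element~$a$ by lifting a generator of the quotient and then to verify the two standard conditions for an internal direct sum: $H\cap\langle a\rangle=\{0\}$ and $H+\langle a\rangle=G$.

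First I would let $\pi\colon G\to G/H$ be the canonical projection and fix a generator~$c$ of the infinite cyclic group~$G/H$. Since $\pi$~is surjective, I can choose any $a\in G$ with $\pi(a)=c$. This~$a$ will be the candidate.

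Next I would check that $H\cap\langle a\rangle=\{0\}$. If $na\in H$ for some $n\in\Z$, then $0=\pi(na)=nc$ in~$G/H$; because $c$~has infinite order in~$G/H$, this forces $n=0$, so $na=0$.

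Finally I would show that every $g\in G$ lies in $H+\langle a\rangle$. Given $g\in G$, write $\pi(g)=kc$ for a unique $k\in\Z$. Then $\pi(g-ka)=kc-kc=0$, so $h:=g-ka\in H$ and $g=h+ka\in H+\langle a\rangle$. Combined with the previous step this gives the internal direct sum decomposition $G=H\oplus\langle a\rangle$. There is no serious obstacle here: the result is the standard splitting of a short exact sequence of abelian groups whose quotient is free (in fact free of rank~$1$), and the only content is the observation that $c$~having infinite order makes the lift~$a$ automatically of infinite order and intersect~$H$ trivially.
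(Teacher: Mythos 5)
Your proof is correct, and it is the standard argument: lift a generator of the infinite cyclic quotient, then verify the trivial-intersection and sum conditions directly using that the generator has infinite order. The paper itself does not give a proof here but simply refers to Fuchs (Theorem~14.4 of \emph{Infinite Abelian Groups}), remarking that the needed fact is established in the course of Fuchs's argument; what you have written is precisely the elementary splitting argument one would extract from that reference, so there is nothing to add or correct.
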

	
A proof may be found in \cite[14.4]{fuchs1970infinite}. 
This is not the statement of the theorem but it is exactly what is proved 
by the author.
	
The main idea of the proof of the following lemma is to mimic the well known 
proof of the fact that every subgroup of a free abelian group is free.

\begin{lem} 
The group $G_{T,n}$ has a basis of the form 
$\{[\chi_{\vec\xi}]_{p_{T,n}}:\xi<\kappa\}\dotcup\{[f]_{p_{T,n}}:f\in\calF_{T,n}\}$
for some subset $\calF_{T,n}$ of~$({\Z}^{(\kappa)})^\omega$.  
\end{lem}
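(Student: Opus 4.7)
The plan is to mimic the standard transfinite-induction argument showing that every subgroup of a free abelian group is free, but well-ordering the generators of the ambient free abelian group so that each $[\chi_{\vec\xi}]_{p_{T,n}}$ with $\xi<\kappa$ appears as a basis element. Write $p=p_{T,n}$, let $F$ denote the free abelian group generated by $\{\frac{1}{n!}[\chi_{\vec\xi}]_p:\xi<\kappa\}\cup\{\frac{1}{n!}[f^T_\xi]_p:\kappa\le\xi<\kappa+\kappa\}$, and enumerate this basis as $(h_\alpha:\alpha<\kappa+\kappa)$ with $h_\alpha=\frac{1}{n!}[\chi_{\vec\alpha}]_p$ for $\alpha<\kappa$ and $h_\alpha=\frac{1}{n!}[f^T_\alpha]_p$ for $\kappa\le\alpha<\kappa+\kappa$. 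For each $\alpha\le\kappa+\kappa$ set $F_\alpha=\langle h_\beta:\beta<\alpha\rangle$ and $G_\alpha=G_{T,n}\cap F_\alpha$.

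By transfinite recursion on $\alpha$ I build (possibly trivial) cyclic subgroups $C_\beta$ so that $G_\alpha=\bigoplus_{\beta<\alpha}C_\beta$. At successor stages $\alpha\to\alpha+1$, the quotient $F_{\alpha+1}/F_\alpha$ is infinite cyclic, so $G_{\alpha+1}/G_\alpha$ is either trivial (take $C_\alpha=0$) or infinite cyclic, and in the latter case Proposition~\ref{propcyclicfuchs} splits off a cyclic summand $C_\alpha$ with $G_{\alpha+1}=G_\alpha\oplus C_\alpha$. At a limit $\lambda$ one takes $G_\lambda=\bigcup_{\alpha<\lambda}G_\alpha$; the union of an increasing chain of internal direct sums is automatically an internal direct sum, so $G_\lambda=\bigoplus_{\beta<\lambda}C_\beta$ at no extra cost.

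The key point, which I expect to be the main obstacle, is arranging $C_\alpha=\langle[\chi_{\vec\alpha}]_p\rangle$ whenever $\alpha<\kappa$. Here I would argue as follows: every $y\in F_\alpha$ with $\alpha<\kappa$ is a $\Z$-linear combination of the constant sequences $\frac{1}{n!}\chi_{\vec\beta}$ with $\beta<\alpha$, hence the value of (any representative of) $y$ at coordinate~$\alpha$ is identically~$0$. Consequently, for an element $mh_\alpha+y=\frac{m}{n!}[\chi_{\vec\alpha}]_p+y$ to belong to $\ult(\Z,p)$, one needs $n!\mid m$. So the minimum positive $h_\alpha$-coefficient produced by elements of $G_{\alpha+1}$ is exactly $n!$, and it is realised by $n!h_\alpha=[\chi_{\vec\alpha}]_p$, which already lies in $G_{T,n}$. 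Proposition~\ref{propcyclicfuchs} then gives $G_{\alpha+1}=G_\alpha\oplus\langle[\chi_{\vec\alpha}]_p\rangle$. For $\kappa\le\alpha<\kappa+\kappa$ with $G_{\alpha+1}/G_\alpha\neq 0$, I simply pick any generator $a_\alpha\in G_{\alpha+1}\subseteq\ult(\Z,p)$, write $a_\alpha=[f_\alpha]_p$ with $f_\alpha\in(\Z^{(\kappa)})^\omega$, and drop $f_\alpha$ into $\calF_{T,n}$.

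Collecting the nonzero summands $C_\beta$ at stage $\kappa+\kappa$ produces the desired basis $\{[\chi_{\vec\xi}]_p:\xi<\kappa\}\dotcup\{[f]_p:f\in\calF_{T,n}\}$. The only non-routine step is the coordinate calculation described above; the rest is the textbook recursion, with Proposition~\ref{propcyclicfuchs} doing the splitting at each successor.
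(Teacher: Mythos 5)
Your proposal is correct and takes essentially the same route as the paper: build up along the free basis of the ambient group, intersect with $\ult(\Z,p_{T,n})$, and use the splitting proposition at successors, taking unions at limits. The only small difference is that the paper disposes of the $\mu<\kappa$ case by noting directly that $G_\mu=\langle[\chi_{\vec\xi}]_{p_{T,n}}:\xi<\mu\rangle$, whereas you derive the same conclusion via the coordinate computation and an appeal to Proposition~\ref{propcyclicfuchs} (which, as stated, only yields some cyclic complement; to get the specific generator $[\chi_{\vec\alpha}]_{p_{T,n}}$ you should add the one-line observation that it maps to a generator of $G_{\alpha+1}/G_\alpha$, or just argue directly as the paper does).
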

	
\begin{proof}
Let $H_\mu$ the the group generated by 
$\{\frac{1}{n!}[\chi_{\vec\xi}]_{p_{T, n}}: \xi <\mu \}$ if $\mu\le\kappa$ 
and by the union of $\{\frac{1}{n!}[\chi_{\vec\xi}]_{p_{T, n}}: \xi <\kappa\}$
and $\{ \frac{1}{n!}[f^T_\xi]_{p_{T, n}}: \kappa \leq \xi <\mu \}$
when $\kappa<\mu\leq\kappa +\kappa$. 

Let $G_\mu = H_\mu \cap\ult(\Z, p_{T, n})$ for all~$\mu$.

For every $\mu<\kappa+\kappa$ we shall find $h_\mu$ so that
$G_{\mu+1}=G_\mu\oplus\bigl<\{[h_\mu]_{p_{T, n}}\}\bigr>$, as follows.

For $\mu<\kappa$ the group $G_\mu$ is generated 
by~$\{[\chi_{\vec \xi}]_{p_{T, n}}: \xi<\mu\}$, 
so $G_{\mu+1}=G_{\mu}\oplus\langle\{[\chi_{\vec \mu}]\}\rangle$ and we have
$h_\mu=\chi_{\vec \mu}$.
		
For $\mu\ge\kappa$ observe that $G_{\mu+1}\cap H_\mu=G_\mu$, so:
$$
\frac{G_{\mu+1}}{G_\mu}=
    \frac{G_{\mu+1}}{{G_{\mu+1}}\cap H_\mu}\approx
    \frac{G_{\mu+1}+H_\mu}{H_\mu}\leq
    \frac{H_{\mu+1}}{H_\mu}.
$$
The group $\frac{H_{\mu+1}}{H_\mu}$ is cyclic infinite, 
so either $\frac{G_{\mu+1}}{G_\mu}$ is infinite and cyclic or $G_{\mu+1}=G_{\mu}$. 
By Proposition \ref{propcyclicfuchs} there exists $a_{\mu} \in G_{\mu+1}$ such 
that $G_{\mu+1}=G_{\mu}\oplus\langle\{a_\mu\}\rangle$ 
(and $a_\mu=0$ in case $G_{\mu+1}=G_\mu$). 
Take $h_\mu$ such that $[h_\mu]_{p_{T, n}}=a_\mu$.
		
For every $\mu<\kappa+\kappa$, it follows that 
$G_{\mu+1}=G_{\mu}\oplus\bigl<\{[h_\mu]_{p_{T, n}}\}\bigr>$. 
Since $G_{T, n}=\bigcup_{\mu<\kappa+\kappa} G_\mu$, it follows 
that $G_{T, n}=\bigoplus_{\mu<\kappa+\kappa}\bigl<\{[h_\mu]_{p_{T, n}}\}\bigr>$. 

The set 
$\calF_{T, n}=\{h_{\mu}: \kappa\leq \mu<\kappa+\kappa, [h_{\mu}]_{p_{T, n}}\neq 0\}$ 
is as required.
\end{proof}
	
For the rest of this article we fix such a set~$\calF_{T, n}$ as above
for each pair $(T, n)$ in~$\calT \times \omega$.

The next lemma makes good on the promise from the beginning of this section
as it shows how to make our topology countably compact.
	
\begin{lem} \label{condition.cc} 
Assume that for every pair $(T,n)$ in $\calT\times\omega$ every sequence~$f$
in~$\calF_{T,n}$ has a $p_{T, n}$-limit in ${\Z}^{(\kappa)}$.
Then every finite power of ${\Z}^{(\kappa)}$ is countably compact.
\end{lem}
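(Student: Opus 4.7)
The plan is, given any finite list $g_1,\dots,g_k$ of sequences in $(\Z^{(\kappa)})^\omega$, to produce a single free ultrafilter $p$ on $\omega$ such that each $g_i$ admits a $p$-limit in $\Z^{(\kappa)}$. That suffices: since $p$-limits in a product are taken coordinatewise, the $k$-tuple sequence $n\mapsto(g_1(n),\dots,g_k(n))$ will then have a $p$-limit in $(\Z^{(\kappa)})^k$, and the earlier proposition characterising countable compactness through $\calU$-limit points yields the conclusion.

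To construct such a $p$, I would build a tower $T=(A_0,\dots,A_{k-1})\in\calT$ by iterating Lemma~\ref{partial.generator}. Start with $T_0=\emptyset$, so $\calE_{T_0}=\calE_\emptyset$ is maximal l.i.~mod~$\omega^*$; apply the lemma to $g_1$ with $B=\omega$ to obtain an infinite $A_0\subseteq\omega$ together with a decomposition of $g_1|_{A_0}$ as a finite $\Q$-linear combination of finitely many elements of $\calE_\emptyset$ and of $\{\chi_{\vec\xi}:\xi<\kappa\}$. Set $T_1=(A_0)$; by Corollary~\ref{baseexistence} the family $\calE_{T_1}$ is maximal l.i.~mod~$A_0^*$ and contains $\calE_\emptyset$, so applying the lemma to $g_2$ with $B=A_0$ and $\calE=\calE_{T_1}$ yields an infinite $A_1\subseteq A_0$ and a similar decomposition. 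Iterating $k$ times gives $T=T_k$; the nesting $\calE_{T|j}\subseteq\calE_T$ ensures that all $k$ decompositions use only elements of $\calE_T$, and restricting each equality from $A_{i-1}$ to $A_{k-1}=l(T)$ preserves them all simultaneously.

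Next pick $n\in\omega$ so large that $n!$ is a common multiple of the denominators of all the (finitely many) rational coefficients appearing, and set $p=p_{T,n}$. Because $l(T)\in p$, each decomposition becomes an equality in $\ult(\Q,p)$, and rewriting each coefficient $r$ as $(n!\,r)\cdot\tfrac1{n!}$ exhibits $[g_i]_p$ as a $\Z$-linear combination of the generators $\tfrac1{n!}[f^T_\xi]_p$ and $\tfrac1{n!}[\chi_{\vec\xi}]_p$ of $G_{T,n}$. Since $g_i$ is integer-valued we have $[g_i]_p\in\ult(\Z,p)$, hence $[g_i]_p\in G_{T,n}$. Using the basis furnished by the preceding lemma I would then write $[g_i]_p=\sum_{h}a_{i,h}[h]_p+\sum_{\xi}b_{i,\xi}[\chi_{\vec\xi}]_p$ over finitely many $h\in\calF_{T,n}$ and $\xi<\kappa$ with integer coefficients. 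The hypothesis provides a $p$-limit $x_h\in\Z^{(\kappa)}$ for each such $h$, and the constant sequence $\chi_{\vec\xi}$ trivially has $p$-limit $\chi_\xi$; unpacking the ultrapower equality into a pointwise equality on a set in $p$ and invoking continuity of addition and of integer scalar multiplication then gives that $g_i$ has $p$-limit $\sum_{h}a_{i,h}x_h+\sum_{\xi}b_{i,\xi}\chi_\xi\in\Z^{(\kappa)}$, as required.

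The main obstacle is not any single step but the bookkeeping that forces one pair $(T,n)$ to handle all $k$ sequences at once. The nested structure of the $\calE_{T|j}$ from Corollary~\ref{baseexistence} is what allows all $g_i$'s to be decomposed over the same family $\calE_T$, while the factor $\tfrac1{n!}$ hard-wired into the definition of $G_{T,n}$ is exactly what absorbs the finitely many rational denominators once $n$ is taken large enough; everything else reduces to the standard fact that $p$-limits commute with the continuous group operations.
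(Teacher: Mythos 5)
Your proof is correct and follows essentially the same path as the paper's: iterate Lemma~\ref{partial.generator} along the nested families $\calE_{T|_i}$ to build a tower $T$ over which all $g_i$ decompose, choose $n$ to clear denominators so that each $[g_i]_{p_{T,n}}\in G_{T,n}$, and then read off the $p_{T,n}$-limit from the basis $\calF_{T,n}$ via the hypothesis. Your closing step unpacking the ultrapower equality into a pointwise equality on a set in $p$ and invoking continuity of the group operations makes explicit a point the paper leaves to the reader, but it is the same argument.
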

	
\begin{proof}
A sequence in some finite power of ${\Z}^{(\kappa)}$ is represented
by finitely members of~$({\Z}^{(\kappa)})^{\omega}$, say $g_0, \dots, g_m$.
We show that there is one ultrafilter~$p$ such that $\plim g_i$ exists
for all~$i$, namely $p_{T,n}$ for a suitable~$T$ and~$n$.
		
Recursively, we define a tower $T=(A_0, \dots, A_m)$ and 
for $i\leq m$ finite subsets $E_i$ and $D_i$ of $\calE_{T|_i}$ and $\kappa$
respectively together with finite sets $(r_f^i: f \in E_i)$ and 
$(s_\nu^i: \nu \in D_i)$ of rational numbers such that
$$
g_i|_{A_i}=\sum_{f \in E_i}r_f^i\cdot f|_{A_i}
        +\sum_{\nu \in D_i} s_\nu^i\cdot \chi_{\vec \nu}|_{A_i}\eqno(*)
$$
For $i=0$, use Lemma \ref{partial.generator} applied to~$\calE_{\emptyset}$
to obtain $A_0$, $E_0$, $D_0$, $(r^0_f : f \in E_0)$ and 
$(s^0_\nu: \nu \in D_0)$ such that $(*)$ holds with~$i=0$.
		
To go from $i$ to~$i+1$ apply Lemma~\ref{partial.generator} 
to~$\calE_{(A_0, \dots, A_i)}$
to obtain $A_{i+1}$, $E_{i+1}$, $D_{i+1}$, $(r^{i+1}_f : f \in E_{i+1})$,
and $(s^{i+1}_\nu: \nu \in D_{i+1})$ so that $(*)$~holds for~$i+1$.

Let $A=A_m$ and let $n$ be sufficiently large so that $n! r_f^i$ 
and $n! s_\nu^i$ are integers, for all $i\leq m$,  $f \in E_i$, 
and $\nu \in D_i$.
Then $g_i|_A=\sum_{f \in E_i}n!\cdot r_f^i\cdot(\frac{1}{n!}\cdot f)|_A 
 +\sum_{\nu \in D_i}n!\cdot s_\nu^i\cdot(\frac{1}{n!}\cdot\chi_{\vec \nu})|_A$
for all~$i$. 

As $l(T)=A\in p_{T, n}$ and for each $E_i$ is a subset of~$\calE_T$, it 
follows that $[g_i]_{p_{T, n}}\in G_{T, n}$. 
Therefore, each $ [g_i]_{p_{T, n}}$ is an integer combination of 
$\{ [f]_{p_{T, n}}: f \in \calF_{T, n} \} \cup\{ [\chi_\xi]_{p_{T, n}}:\xi<\kappa\}$. 
Then, by hypothesis, it follows that each $g_i$ has a $p_{T, n}$-limit. 
This completes the proof.
\end{proof}

\section{Constructing homomorphisms}

Through this section, we let $G={\Z}^{(\kappa)}$ 
and  we let $\{h_\xi : \omega \leq \xi < \kappa\}$ be an enumeration 
of~$G^\omega$ such that $\supp h_\xi(n) \subseteq \xi$ whenever $n\in \omega$ 
and $\omega \leq \xi <\kappa$,
and so that each element of $G^\omega$ appears at least $\cee$ many times.
	
\begin{lem}
There exists a family $(J_{T, n}: T \in \calT, n \in \omega)$ of pairwise 
disjoint subsets of $\kappa$ such that $\{h_\xi: \xi \in J_{T, n}\}=\calF_{T, n}$.
\end{lem}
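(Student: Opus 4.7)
The plan is a straightforward cardinality-and-choice argument, matching the supply of available indices $\xi$ with $h_\xi = h$ against the demand coming from how many families $\calF_{T,n}$ the element $h$ is required to belong to.

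First I would count. A finite tower is a finite sequence of elements of $[\omega]^\omega$, so $|\calT| = \cee^{<\omega} = \cee$, and hence $|\calT \times \omega| = \cee$. In particular, for every $h \in G^\omega$ the set
\[
\calN_h = \{(T,n) \in \calT \times \omega : h \in \calF_{T,n}\}
\]
has $|\calN_h| \leq \cee$. On the other hand, by the hypothesis on the enumeration $(h_\xi)$, for every $h \in G^\omega$ the set $S_h = \{\xi \in [\omega, \kappa) : h_\xi = h\}$ has cardinality at least $\cee$, so in particular $|\calN_h| \leq |S_h|$.

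Using the axiom of choice, I would pick for each $h \in G^\omega$ an injection $\phi_h : \calN_h \to S_h$, and then set
\[
J_{T, n} = \{\phi_h(T, n) : h \in \calF_{T, n}\}
\]
for every $(T, n) \in \calT \times \omega$. The set equality $\{h_\xi : \xi \in J_{T, n}\} = \calF_{T, n}$ is then immediate: if $\xi \in J_{T,n}$ then $\xi = \phi_h(T, n)$ for some $h \in \calF_{T,n}$, forcing $h_\xi = h \in \calF_{T, n}$; conversely, every $h \in \calF_{T, n}$ has $(T,n) \in \calN_h$, and $\phi_h(T, n) \in J_{T, n}$ witnesses $h_{\phi_h(T, n)} = h$, so $h \in \{h_\xi : \xi \in J_{T, n}\}$.

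For pairwise disjointness, a common element $\xi \in J_{T, n} \cap J_{T', n'}$ would be of the form $\phi_h(T, n) = \phi_{h'}(T', n')$ for some $h, h'$; but then $h = h_\xi = h'$, and the injectivity of $\phi_h$ forces $(T, n) = (T', n')$. The only conceptual point is the observation $|\calT \times \omega| = \cee$, which matches the lower bound $\cee$ on the multiplicity with which each $h$ appears in the enumeration; beyond that there is no real obstacle.
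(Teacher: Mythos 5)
Your argument is correct and matches the paper's proof essentially verbatim: both choose, for each $h\in G^\omega$, an injection from $\calT\times\omega$ (or its relevant subset) into $\{\xi:h_\xi=h\}$, which exists because $|\calT\times\omega|=\cee$ while each $h$ appears at least $\cee$ times, and then define $J_{T,n}$ as the image of $\calF_{T,n}$ under these injections. You simply spell out the cardinality bookkeeping and the verification of disjointness and the set equality, which the paper leaves implicit.
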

	
\begin{proof}
For each $f \in G^\omega$ there is an injective map 
$\phi_f:\calT\times \omega\to \{\xi \in \kappa:f=h_\xi\}$. 
Let $J_{T, n}=\{\phi_f(T, n): f \in \calF_{T, n}\}$ and we are done.
\end{proof}
	
For the rest of this section, we fix a family 
$(J_{T, n}: T\in\calT, n \in \omega)$ as above.
	
The following lemma is the key to the main result.
	
\begin{lem} \label{countable.hom}
Assume we have a non-zero element~$d$ of $G$, an injective sequence~$r$ in~$G$,
and a countably infinite subset~$D$ of~$\kappa$ such that 
\begin{enumerate}
\item $\omega\cup\supp d\cup \bigcup_{n \in \omega}\supp r(n) \subseteq D$, 
\item $D\cap J_{T, n}\neq \emptyset$ for infinitely many $(T, n)$'s and, 
\item $\supp h_\xi(n)\subseteq D$ for all $n \in \omega$ and 
      $\xi \in D \setminus \omega$
\end{enumerate}
Then there exists a homomorphism $\phi:\Z^{(D)}\to\T$ such that:
\begin{enumerate}
\item $\phi(d)\neq 0$
\item $\plim[p_{T, n}]_k \phi(h_\xi(k))=\phi(\chi_\xi)$, 
      whenever $T\in\calT$, $n\in \omega$, and $\xi \in D\cap J_{T, n}$.
\item $\phi\circ r$ does not converge.
\end{enumerate}
\end{lem}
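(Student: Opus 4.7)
The plan is to define $\phi$ by prescribing $\phi(\chi_\xi)\in\T$ for each $\xi\in D$ and extending by $\Z$-linearity. Enumerate $D$ in its inherited ordinal order (a countable order type since $D$ is countable) and process the elements by transfinite recursion. Hypothesis~(3) together with $\supp h_\xi(k)\subseteq\xi$ ensures that when the recursion reaches~$\xi$, every value $\phi(h_\xi(k))$ is already determined by the earlier choices. At each stage~$\xi$ one of two scenarios occurs: either $\xi\in D\cap J_{T,n}$ for a pair $(T,n)$ which is unique by the pairwise disjointness of the $J_{T,n}$'s, in which case conclusion~(2) of the lemma forces
\[
\phi(\chi_\xi)\;=\;\plim[p_{T,n}]_k\phi(h_\xi(k)),
\]
a value which exists uniquely in the compact Hausdorff group~$\T$; or $\xi$ lies outside every $J_{T,n}$ and $\phi(\chi_\xi)$ may be chosen freely in~$\T$. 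Since $\omega\subseteq D$ while every $J_{T,n}\subseteq[\omega,\kappa)$, the first $\omega$ stages of the recursion are purely free, providing an infinite reserve of flexibility.

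The free choices must secure conclusions~(1) and~(3). For~(1), unfolding the forced stages through the recursive $p$-limit formula expresses $\phi(d)$ as an iterated $p$-limit of integer combinations of $\phi$-values at free stages; since $d\neq 0$, this dependence on the free parameters is not identically zero (otherwise $d$ itself would realise the trivial character on every forced line, contradicting the structure provided by Corollary~\ref{baseexistence} and the choice of the $\calF_{T,n}$'s), so some free stage~$\eta$ can be identified at which the induced map $\phi(\chi_\eta)\mapsto\phi(d)$ is non-constant, and tuning $\phi(\chi_\eta)$ away from the unique value producing $0$ achieves $\phi(d)\neq 0$. For~(3), fix two disjoint closed arcs $U_0,U_1\subseteq\T$ of positive length and interleave into the recursion countably many tasks of the form ``for $(j,m)\in\{0,1\}\times\omega$, find $k\ge m$ with $\phi(r(k))\in U_j$''; completing them all makes both sets $\{k:\phi(r(k))\in U_j\}$ infinite, yielding two distinct cluster points of $\phi\circ r$ and precluding convergence. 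Injectivity of~$r$ guarantees that for any such task some $k\ge m$ has a support element $\nu\in\supp r(k)$ that is either a yet-unprocessed free stage or a yet-untuned free ancestor inside the $p$-limit recursion defining some forced stage in $\supp r(k)$; tuning that free parameter shifts $\phi(r(k))$ surjectively across~$\T$ (the relevant integer coefficient is nonzero and multiplication by it is onto~$\T$), so the desired placement is always attainable.

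The principal obstacle is orchestrating this bookkeeping coherently: the forcing in~(2) eliminates direct control at many stages, the trace-back argument underlying~(1) and~(3) requires that the iterated $p$-limits genuinely depend on the free parameter being tuned rather than being washed out by limit operations, and the simultaneous scheduling of infinitely many non-convergence tasks plus the single $\phi(d)\neq 0$ task must not create unsatisfiable constraints. The infinite pool of free stages contained in~$\omega$, the finite support of each sequence term, and the surjectivity of nonzero-integer multiplication on~$\T$ together provide enough room to decouple these constraints, so that at every point in the recursion only finitely many free parameters are committed while infinitely many remain available to service later tasks.
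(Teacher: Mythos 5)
Your plan inverts the logical order of the paper's argument in a way that does not survive scrutiny. You propose to run a forward recursion over $D$ in which the value $\phi(\chi_\xi)$ at a ``forced'' stage $\xi\in D\cap J_{T,n}$ is literally \emph{defined} as the $p_{T,n}$-limit of the already-determined values $\phi(h_\xi(k))$, and then to secure conclusions (1) and (3) by tuning the remaining free parameters. The step that fails is the claim that tuning a free parameter $\eta$ propagates through the $p$-limit recursion so as to move $\phi(d)$ (or $\phi(r(k))$). A $p$-limit is invariant under changing finitely many terms: altering $\phi(\chi_\eta)$ only alters $\phi(h_\xi(k))$ for those $k$ with $\eta\in\supp h_\xi(k)$, and unless that set of $k$'s lies in $p_{T,n}$ the value $\plim[p_{T,n}]_k\phi(h_\xi(k))$ is completely unaffected. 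Nothing in the hypotheses guarantees that a single free $\eta$ (and your infinite pool of free stages lives entirely inside $\omega$, which need not meet $\bigcup_k\supp h_\xi(k)$ at all) appears in $p_{T,n}$-many of the supports $\supp h_\xi(k)$. So the parenthetical in your argument for (1) -- that the dependence on free parameters ``is not identically zero'' because $d\neq 0$ -- is exactly the assertion that needs proof and is false in general. Influencing a forced $p$-limit requires coordinating \emph{infinitely} many free choices at once, which your task-scheduling scheme, designed to commit only finitely many parameters at a time, cannot do. The same washout defeats your non-convergence tasks whenever $\supp r(k)$ consists entirely of forced stages.

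The paper sidesteps this obstruction by never defining $\phi$ pointwise until the very end. Lemma~\ref{countable.hom.aux} builds a decreasing sequence of $\epsilon$-arc functions $\varrho_*\geq\varrho_{i_0}\geq\varrho_{i_1}\geq\cdots$ whose widths shrink to $0$, using Lemma~\ref{step} (the stack/Kronecker machinery) to guarantee that at each step the arcs assigned to $\supp f(a_{k_t,i_t})$ can be shrunk compatibly inside the target arc $\varrho_{i_{t-1}}(\xi_f)$. The condition $\phi(d)\neq 0$ is not ``tuned'' at the end: it is built in at step zero by choosing $\varrho_*$ so that $0\notin\overline{\sum_{\mu\in\supp d}d(\mu)\varrho_*(\mu)}$, and every later arc function refines $\varrho_*$, so the final $\phi$ (defined by intersecting the nested arcs) automatically lands outside $0$. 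Non-convergence of $\phi\circ r$ is handled equally cleanly in the proof of Lemma~\ref{countable.hom}: two fresh selective ultrafilters $p_0,p_1$ are assigned to the singleton family $\{r\}$ with targets $\chi_k$ and $\chi_{k'}$, and then Lemma~\ref{countable.hom.aux}, applied with $d'=\chi_k-\chi_{k'}$, forces $\plim[p_0]\phi\circ r\neq\plim[p_1]\phi\circ r$. The essential point you are missing is that the requirement~(2) of the lemma is not something one can realize by taking a $p$-limit of an already-constructed $\phi$; one must construct $\phi$ so that the $p$-limits come out right, and that requires controlling infinitely many coordinates simultaneously, which is exactly what the shrinking arc functions and the linear independence of the $\calF_{T,n}$'s (via the stack apparatus) make possible.
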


Before proving this lemma, we show how to use it to prove the main result. 
First, we use it to prove another lemma:
	
\begin{lem} \label{notcountable.hom}
Assume $d$ is a non-zero element of~$G$ and $r$~is an injective sequence in~$G$.
Then there exists a homomorphism $\phi: \Z^{(\kappa)}\to\T$ such that 
\begin{enumerate}
\item $\phi(d)\neq 0$
\item $\plim[p_{T, n}]_k\phi(h_\xi(k))=\phi(\chi_\xi)$, 
      whenever $T \in \calT$, $n\in \omega$ and $\xi \in J_{T, n}$.
\item $\phi\circ r$ does not converge.
\end{enumerate}
\end{lem}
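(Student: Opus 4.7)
The plan is to obtain $\phi$ by applying Lemma~\ref{countable.hom} once, on a suitable countable $D_0\subseteq\kappa$, and then extending the resulting homomorphism to all of $\Z^{(\kappa)}$ by a one-basis-element-at-a-time transfinite recursion whose only external input is the compactness of~$\T$. First I would construct a countable set $D_0\subseteq\kappa$ that (i)~contains $\omega\cup\supp d\cup\bigcup_{n}\supp r(n)$, (ii)~is closed under $h$-supports, i.e.\ $\xi\in D_0\setminus\omega$ implies $\supp h_\xi(n)\subseteq D_0$ for every~$n$, and (iii)~meets infinitely many of the pairwise disjoint sets~$J_{T,n}$. Such a $D_0$ is produced by the standard $\omega$-step closure, starting from $\omega\cup\supp d\cup\bigcup_n\supp r(n)$ together with one element drawn from each of infinitely many non-empty~$J_{T,n}$'s. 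Applying Lemma~\ref{countable.hom} to $(d,r,D_0)$ yields $\phi_0\colon\Z^{(D_0)}\to\T$ satisfying the three conclusions of that lemma relative to~$D_0$.

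Next I would enumerate $\kappa\setminus D_0$ in its natural ordinal ordering as $\{\xi_\alpha:\alpha<\kappa\}$ and build a chain $(\phi_\alpha:\alpha\leq\kappa)$ of homomorphisms, where $\phi_\alpha$ has domain $\Z^{(D_\alpha)}$ with $D_\alpha=D_0\cup\{\xi_\beta:\beta<\alpha\}$, taking unions at limit ordinals. At a successor step $\alpha\to\alpha+1$ the crucial observation is that the ordinal enumeration forces $\xi_\alpha\subseteq D_\alpha$: any $\eta<\xi_\alpha$ is either already in~$D_0$ or equals some $\xi_\gamma$ with $\gamma<\alpha$. Consequently $\supp h_{\xi_\alpha}(k)\subseteq\xi_\alpha\subseteq D_\alpha$ and $\phi_\alpha(h_{\xi_\alpha}(k))$ is defined for every~$k$. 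I then set
$$
\phi_{\alpha+1}(\chi_{\xi_\alpha})=\plim[p_{T,n}]_k\phi_\alpha(h_{\xi_\alpha}(k)),
$$
where $(T,n)$ is the unique pair with $\xi_\alpha\in J_{T,n}$ (uniqueness because the $J_{T,n}$ are pairwise disjoint); the limit exists in~$\T$ by compactness. If $\xi_\alpha$ does not lie in any $J_{T,n}$, pick $\phi_{\alpha+1}(\chi_{\xi_\alpha})$ arbitrarily. Extending linearly produces~$\phi_{\alpha+1}$.

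The terminal homomorphism $\phi=\bigcup_{\alpha<\kappa}\phi_\alpha$ is defined on all of $\Z^{(\kappa)}$ and satisfies the three required conditions: (1) and~(3) are inherited verbatim from~$\phi_0$ because $d$ and every term of~$r$ lie in $\Z^{(D_0)}$ and later stages do not alter values there; condition~(2) holds for $\xi\in D_0\cap J_{T,n}$ by Lemma~\ref{countable.hom}, and for $\xi=\xi_\alpha\in J_{T,n}$ with $\xi_\alpha\notin D_0$ it holds by the very definition of $\phi_{\alpha+1}(\chi_{\xi_\alpha})$, since $h_{\xi_\alpha}(k)\in\Z^{(D_\alpha)}$ guarantees $\phi(h_{\xi_\alpha}(k))=\phi_\alpha(h_{\xi_\alpha}(k))$. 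The main obstacle is really just the careful choice of~$D_0$ so that Lemma~\ref{countable.hom} applies; once $\phi_0$ is secured the remainder of the construction is driven purely by the fact that every sequence in the compact group~$\T$ has a $p$-limit for every free ultrafilter~$p$ on~$\omega$.
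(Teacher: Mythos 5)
Your argument is essentially the paper's own proof: construct a countable $D_0$ by a closing-off argument, apply Lemma~\ref{countable.hom} to get $\phi_0$ on $\Z^{(D_0)}$, and then extend one coordinate at a time along the monotone enumeration of $\kappa\setminus D_0$, using compactness of~$\T$ to take the required $p_{T,n}$-limit at each successor step and unions at limits. The only (immaterial) cosmetic difference is that the paper assigns the value $0$ when $\xi_\alpha$ lies in no $J_{T,n}$, whereas you allow an arbitrary choice.
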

	
\begin{proof}
Using a closing-off argument construct a countable subset~$D$ of~$\kappa$
that intersects infinitely many sets $J_{T,n}$, and that
contains $\omega$, $\supp d$, $\supp r(n)$ for all~$n$ as well 
as $\supp h_\xi(n)$ whenever $\xi\in D\setminus\omega$ and $n\in\omega$.

By the previous Lemma, there exists a homomorphism 
$\phi_0: \Z^{(D)}\to\T$ such that 
$\phi_0(d)\neq 0$, 
$\phi\circ r$ does not converge, and 
$\plim[p_{T, n}]_k\phi_0(h_\xi(k))=\phi_0(\chi_\xi)$ 
whenever $T\in\calT$, $n\in \omega$ and $\xi \in D\cap J_{T, n}$.
		
We let $\langle\alpha_\delta: \delta<\kappa\rangle$ be the monotone
enumeration of~$\kappa\setminus D$. 
For $\gamma\leq \kappa$, 
let $D_\gamma=D\cup\{\alpha_\delta: \delta<\gamma\}$. 
So $D_0=D$ and $D_\kappa=\kappa$.
		
Recursively, we construct, for $\gamma\leq \kappa$, an increasing sequence
of homomorphisms $\phi_\gamma: \Z^{(D_\gamma)}\to \T$ such that 
$\plim[p_{T, n}]_k\phi_\gamma(h_\xi(k))=\phi_\gamma(\chi_\xi)$ whenever
$T \in \calT$, $n\in \omega$ and $\xi \in D_\gamma\cap J_{T, n}$. 
Our homomorphism~$\phi$ will be~$\phi_\kappa$. 
The basis step~$0$ is already done, and for limit steps, we just unite all 
previous homomorphisms. 

To define $\phi_{\gamma+1}$ given~$\phi_\gamma$ it suffices to specify
the value $\phi_{\gamma+1}(\chi_{\alpha_\gamma})$.
		
If $\alpha_\gamma \in J_{T, n}$ for some $T \in \calT$ and $n \in \omega$
then we put 
$\phi_{\gamma+1}(\chi_{\alpha_\gamma})=\plim[p_{T, n}]_n \phi_\gamma(h_\gamma(n))$.
This is well defined because 
$\supp h_\gamma(n)\subseteq \gamma \subseteq D_\gamma$ for all~$n$ and
because $\T$~is compact.  
In the other case let $\phi_{\gamma+1}(\chi_{\alpha_\gamma})=0$.
\end{proof}

We can now prove our main result.	
	
\begin{teo} 
Assume the existence of pairwise incompatible $\cee$ selective ultrafilters 
and that $\kappa$ is an infinite cardinal such that~$\kappa^\omega$. 
Then the free abelian group of cardinality~$\kappa$ has a Hausdorff group 
topology without nontrivial converging sequences such that all of its 
finite powers are countably compact.
\end{teo}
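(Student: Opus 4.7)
The plan is to equip $G = \Z^{(\kappa)}$ with the initial topology induced by a carefully chosen family $\Phi$ of group homomorphisms $\phi \colon G \to \T$; equivalently, I embed $G$ diagonally in $\T^\Phi$ and pull back the product topology. Because $\T$ is a Hausdorff topological group, such an initial topology is automatically a group topology, is Hausdorff precisely when $\Phi$ separates points, and enjoys pointwise convergence of $p$-limits: a sequence $f$ in $G$ has $p$-limit $x$ if and only if $\phi \circ f$ has $p$-limit $\phi(x)$ in $\T$ for every $\phi \in \Phi$.

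For the family $\Phi$ I would apply Lemma~\ref{notcountable.hom} once per pair $(d, r)$ with $d \in G \setminus \{0\}$ and $r \in G^\omega$ an injective sequence: it furnishes a homomorphism $\phi_{d,r} \colon G \to \T$ satisfying clauses~(1)--(3) of that lemma. Set $\Phi = \{\phi_{d,r} : d \ne 0,\ r \text{ injective}\}$ and let $\tau$ be the corresponding initial topology on $G$.

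With $\Phi$ in hand I verify the three required properties. \emph{Hausdorffness:} given distinct $x, y \in G$, take $d = x - y$ and any injective $r$; clause~(1) gives $\phi_{d,r}(x) \ne \phi_{d,r}(y)$. \emph{Countable compactness of every finite power:} by Lemma~\ref{condition.cc} it suffices that each $f \in \calF_{T,n}$ have a $p_{T,n}$-limit in $G$; since $\calF_{T,n} = \{h_\xi : \xi \in J_{T,n}\}$, clause~(2) of every $\phi_{d,r}$ says that $\phi_{d,r}(\chi_\xi) = \plim[p_{T,n}]_k \phi_{d,r}(h_\xi(k))$, which by pointwise convergence in $\T^\Phi$ is exactly the statement that $\chi_\xi$ is the $p_{T,n}$-limit of $h_\xi$ in $\tau$. \emph{Absence of nontrivial convergent sequences:} no injective sequence $r$ converges in $\tau$, since by clause~(3) the homomorphism $\phi_{d,r}$ (for any $d\ne 0$) makes $\phi_{d,r} \circ r$ divergent in $\T$; and if $(x_n) \to x$ in $\tau$ without being eventually constant, Hausdorffness rules out any value $\ne x$ appearing infinitely often, so $(x_n)$ has infinitely many distinct values, an injective subsequence can be extracted, and that subsequence still converges to $x$ --- contradicting what was just observed.

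The heavy lifting has already been absorbed into Lemmas~\ref{notcountable.hom} and~\ref{condition.cc}; the residual work is routine bookkeeping about initial topologies and a standard injective-subsequence extraction in a Hausdorff setting. The key conceptual point, and the reason the construction closes cleanly, is that a single homomorphism from Lemma~\ref{notcountable.hom} can simultaneously guarantee divergence along a chosen injective sequence \emph{and} respect \emph{all} the prescribed $p_{T,n}$-limits, so no delicate coherence argument is needed when assembling $\Phi$.
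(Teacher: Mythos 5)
Your proposal is correct and follows essentially the same route as the paper: fix the family $\{\phi_{d,r}\}$ via Lemma~\ref{notcountable.hom}, take the initial topology, read Hausdorffness off clause~(1), countable compactness of finite powers off clause~(2) together with Lemma~\ref{condition.cc}, and absence of nontrivial convergent sequences off clause~(3). The only difference is that you spell out two routine points the paper leaves implicit --- the componentwise characterization of $p$-limits in an initial topology, and the extraction of an injective subsequence from a non-eventually-constant convergent sequence --- and both of those elaborations are accurate.
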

	
\begin{proof} 
Following the notation of the rest of the article, 
given $d \in G\setminus \{0\}$ and an injective sequence~$r$ in~$G$,
Lemma~\ref{notcountable.hom} provides a homomorphism 
$\phi_{d, r}: G\to \T$ such that $\phi_d(d)\neq 0$,
such that $\phi_{d, r}\circ r$ does not converge, and such that
$\plim[p_{T,n}]_k\phi_{d,r}(h_{\xi}(k))=\phi_{d,r}(\chi_\xi)$ whenever
$T\in\calT$, $n\in\omega$ and $\xi\in J_{T,n}$.
We give $G$ the initial topology generated by the collection of homomorphisms 
$\{\phi_{d, r}: d \in G\setminus \{0\}$, $r\in G^\omega$~is injective$\}$
thus obtained and the natural topology of~$\T$.
	
Since the initial topology generated by any collection of group homomorphisms 
is a group topology we do indeed obtain a group topology. 
Since $\T$~is Hausdorff and for every $d\neq 0$ there are many $\phi_{d, r}$ 
with $\phi_{d,r}(d)\neq 0$ it follows at once that our topology is Hausdorff.
		
To see that every finite power of~$G$ is countably compact we 
use Lemma~\ref{condition.cc}. 

Given $T \in \calT$, $n \in \omega$ and $f \in \calF_{T, n}$, 
there exist $\xi \in J_{T, n}$ such that $h_{\xi}=f$. 
For every $d \in G\setminus \{0\}$ and injective $r \in G^\omega$, 
we have $\plim[p_{T, n}]_n \phi_{d, r}(h_\xi(n))=\phi_{d, r}(\chi_\xi)$. 
So $\plim[p_{T,n}]f(n)=\chi_\xi$ and we are done.
		
Since for a given injective sequence $r$ and any $d \in G^\omega$ the
sequence $\phi_{d, r}\circ r$ does not converge and $\phi_{d, r}$ is continuous, 
it follows that $r$ does not converge. 
So $G$ has no nontrivial convergent sequences.
\end{proof}
	
Towards the proof of Lemma~\ref{countable.hom} we formulate a definition 
and a (very) technical lemma.
	
\begin{defin} 
Let $\epsilon >0$.
An \emph{$\epsilon$-arc function} is a function~$\psi$  
from~$\kappa$ into the set of open arcs of~$\T$ (including $\T$ itself) 
such that for all~$\alpha$ either $\psi(\alpha)=\T$ or the length
of~$\psi(\alpha)$ is equal to~$\epsilon$, and the set
$\{\alpha \in \kappa: \psi(\alpha)\neq \T\}$ is finite. 
We will call this finite set the support of~$\psi$ and denote it 
by $\supp \psi$.
	
Given two arc functions $\psi$ and $\varrho$ we write $\psi\leq \varrho$ 
if $\overline{ \psi(\alpha)} \subseteq \varrho(\alpha)$  
or $\psi(\alpha) = \varrho(\alpha)$ for each $\alpha \in \kappa$.
\end{defin}
	
We shall obtain our homomorphisms using limits of such arc functions.
The following lemmas are instrumental in its construction.
	
The following result follows from an argument implicit in the construction 
of~\cite{boero&castro-pereira&tomitaoneselective}, but it may be difficult 
to extract it from that paper. 
We postpone its rather technical proof to the next section.
	
\begin{lem} \label{step} 
Let $p$ be a selective ultrafilter and 
$\calF$ a finite subset of~$G^\omega $ such that the set 
$\{[f]_p: f \in \calF \} \cup \{ [\chi_{\vec\alpha} ]_p: \alpha < \kappa \}$ 
is linearly independent. 

Then for a given $\epsilon>0$ and a finite subset~$E$ of $\kappa$ 
there exist $A\in p$ and a sequence $(\delta_n: n \in A)$ of positive 
real numbers such that
\begin{itemize}		
\item[$(\star)$] whenever $\{U_f: f \in \calF\}$ is a family of arcs of 
     length~$\epsilon$ and $\varrho$~is an arc function of length at 
     least~$\epsilon$ with $\supp \varrho \subseteq E$ there exist 
     for each $n \in A$ a $\delta_n$-arc function $\psi_n \leq \varrho$ such 
     that $\supp \psi_n =\bigcup_{f \in \calF}\supp f(n) \cup E$, and 
     $\sum_{\mu \in \supp f }f(n)(\mu)\cdot\psi_n(\mu)\subseteq U_f$ 
     for each $f \in \calF$.
\end{itemize}
\end{lem}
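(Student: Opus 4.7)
\emph{Proof plan for Lemma~\ref{step}.} Write $\calF = \{f_1, \dots, f_k\}$. For each $n$ in a set $A\in p$ to be determined, I will construct $\psi_n$ by specifying a centre $x_\mu\in\T$ for every $\mu\in\supp\psi_n:=\bigcup_i\supp f_i(n)\cup E$ and setting $\psi_n(\mu)$ to be the open arc of length $\delta_n$ around $x_\mu$; the inclusion $\psi_n\leq\varrho$ forces $x_\mu\in\varrho(\mu)$ with $\delta_n$-clearance when $\mu\in E$, while $x_\mu\in\T$ is free for $\mu\notin E$. Provided $\delta_n\|f_i(n)\|_1<\epsilon$ for each~$i$, the required sum inclusion $\sum_\mu f_i(n)(\mu)\psi_n(\mu)\subseteq U_{f_i}$ reduces to the modular condition
$$
\sum_\mu f_i(n)(\mu)\,x_\mu\equiv u_i\pmod{\Z},\qquad i=1,\dots,k,
$$
with $u_i\in\T$ the centre of $U_{f_i}$. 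So the core problem is to find $A\in p$ on which this constrained $\T^k$-system is solvable for every choice of targets $(u_i)$ and every valid~$\varrho$.

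\emph{Producing $A$ via linear independence and selectivity.} The hypothesis says that for no non-zero $\lambda\in\Q^k$ and no $w\in\Q^{(E)}$ does $\{n:\sum_i\lambda_i f_i(n)=w\}$ lie in~$p$. I would apply the partition (Ramsey) property of the selective ultrafilter~$p$ to the countable partition of $\omega$ indexed by pairs $(\lambda,w)$, where $\lambda\in\Z^k$ is the minimal primitive integer witness (if any) to $\sum_i\lambda_i f_i(n)\in\Z^{(E)}$ and $w$ is the resulting vector. Since no single piece can be in~$p$ by hypothesis, a careful analysis of the selector case yields $A\in p$ on which, for each $n\in A$ and each non-zero primitive $\lambda$, either $\sum_i\lambda_i f_i(n)$ has a coordinate outside~$E$ (a ``fresh'' pivot), or it lies in $\Z^{(E)}$ with coefficients growing unboundedly as $n$ ranges over~$A$. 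Combining these across a $\Q$-basis of $\Q^k$, for each $n\in A$ one can extract $k$ pivot columns $\mu_1(n),\dots,\mu_k(n)\in\supp\psi_n$ yielding an invertible integer submatrix $M'_n$; in the ``all pivots inside $E$'' branch, $|\det M'_n|$ grows unboundedly so that the finite coset of solutions of $M'_n z\equiv w\pmod{\Z^k}$ in $\T^k$, of cardinality $|\det M'_n|$, is eventually dense enough to meet the box $\prod_j\varrho(\mu_j(n))$, each factor of length~$\geq\epsilon$.

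\emph{Completing the construction.} Fix $n\in A$. Choose $\delta_n$ small enough that $\delta_n\|f_i(n)\|_1<\epsilon$ for every~$i$ and that the $\delta_n$-arcs around the chosen centres inside each $\varrho(\mu)$ still lie in~$\varrho(\mu)$. Given $(U_{f_i})$ with centres $u_i$ and a valid $\varrho$: pick $x_\mu\in\varrho(\mu)$ with clearance for $\mu\in E$ outside the pivots, pick $y_\mu\in\T$ arbitrarily for $\mu\in\supp\psi_n\setminus(E\cup\{\mu_j(n)\})$, and solve for pivot centres $z_1,\dots,z_k$ using invertibility of $M'_n$ (selecting a solution inside $\prod_j\varrho(\mu_j(n))$ in the ``pivots inside $E$'' case). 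Let $\psi_n(\mu)$ be the open $\delta_n$-arc around the corresponding centre. Linearity gives $\sum_\mu f_i(n)(\mu)\psi_n(\mu)$ contained in an arc of length $<\epsilon$ around $u_i$, hence inside $U_{f_i}$; the clearance choice gives $\psi_n\leq\varrho$; and the support is as required by construction.

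\emph{Main obstacle.} The delicate point is the second step: converting the $\Q$-linear independence of ultrafilter classes (a statement about fixed rational combinations) into a pointwise invertibility plus, when pivots must lie inside $E$, an uniform growth of $|\det M'_n|$, all on a single $A\in p$. This combinatorial-plus-arithmetic extraction is where the Ramsey structure of the selective ultrafilter is indispensable, and it is the implicit content of the construction in~\cite{boero&castro-pereira&tomitaoneselective} which the authors defer to the next section.
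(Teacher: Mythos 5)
Your reduction of~$(\star)$ to the modular system $\sum_\mu f_i(n)(\mu)\,x_\mu\equiv u_i\pmod\Z$, with the additional constraint $x_\mu\in\varrho(\mu)$ for $\mu\in E$, is a sensible reading of the statement, but the route you propose differs from the paper's and has a genuine gap. The paper does not extract pivots and track determinants; it invokes the integer-stack machinery of~\cite{tomita2015} via Lemma~\ref{stack} and Lemma~\ref{homomorphism.step}. There the family~$\calF$ is transferred by an integer matrix~$\calM$ (with integer inverse~$\calN$, by clause~$(\#)$ of Lemma~\ref{stack}) to stack functions whose pivot entries $f_{i,j,k}(n)(\xi_i(n))$ grow with limiting ratios~$\theta_{i,j,k}$ that are \emph{linearly independent over~$\Q$}; Kronecker's theorem, packaged in the constants $L(\theta_{i,j,0},\dots,\theta_{i,j,r_{i,j}-1},\epsilon)$, then furnishes the required $\epsilon$-density, while the growth hierarchy~(6)--(9) of Definition~\ref{defin.stack} controls the cross-terms and the $g_l$'s supply fresh coordinates outside~$E$.

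The gap is the claim that $|\det M'_n|\to\infty$ makes the solution coset of $M'_n z\equiv w\pmod{\Z^k}$ eventually $\epsilon$-dense in the constraint box $\prod_j\varrho(\mu_j(n))$. That coset's spread is governed by the Smith normal form of~$M'_n$, not by the determinant. Concretely, with $k=2$ and $E=\{0,1\}$, put $f_1(n)=n\chi_0$ and $f_2(n)=nb_n\chi_0+\chi_1$ where $(b_n)$ is a strictly increasing sequence of integers. Then $\{[f_1]_p,[f_2]_p\}\cup\{[\chi_{\vec\alpha}]_p:\alpha<\kappa\}$ is linearly independent and the pivot matrix on columns $\{0,1\}$ has determinant~$n\to\infty$, yet substituting the $n$ lifts $x_0=(u_1+j)/n$ into the second equation pins $x_1\equiv u_2-b_nu_1\pmod\Z$ to a single point of~$\T$, independently of~$j$, so $\varrho(1)$ cannot be met for a generic~$\varrho$. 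Determinant growth is the wrong invariant; one needs rational independence of the limiting coefficient ratios together with the calibrated growth rates, and extracting exactly that structure from the $p$-linear-independence hypothesis is the nontrivial step your outline elides and the paper delegates to the stack construction of~\cite{tomita2015}.
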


Now we proceed to prove Lemma~\ref{countable.hom}. 
We will use the following lemma:
	
\begin{lem} \label{countable.hom.aux}
Let $(\calF^k:k\in\omega)$ be a sequence of countable subsets of~$G^\omega$ 
and let $(p_k:k\in\omega)$ is a sequence of pairwise incomparable selective 
ultrafilters such that for each $k\in\omega$ the set
$\{[f]_{p_k}:f\in\calF^k\}\dotcup\{[\chi_{\vec\xi}]_{p_k}:\xi\in\kappa\}$
is linearly independent and $[f]_{p_k}\ne[g]_{p_k}$ whenever $f\ne g$ 
in~$\calF^k$.
Furthermore let for every $f\in\bigcup_k\calF^k$ an ordinal~$\xi_f$ in~$\kappa$
be given.
In addition let $d$ and $d'$ be non-zero in~$G$ and with disjoint supports.
Finally, let $D$ be a countable subset of~$\kappa$ that contains
$\omega\cup\supp d\cup\supp d'$ and $\bigcup_n\supp f(n)$ for 
every $f\in\bigcup_k\calF^k$.

Then there exists a homomorphism $\phi:\Z^{(D)}\to\T$ such that 
$\phi(d)\neq0$, $\phi(d')\neq0$ and 
$\plim[p_k]_n\phi(f(n))=\phi(\chi_{\xi_f})$, whenever $k\in\omega$ 
and~$f\in\calF^k$.
\end{lem}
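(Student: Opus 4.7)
The plan is to construct $\phi$ by a fusion-style recursion, approximating it with a decreasing chain of arc functions and applying Lemma~\ref{step} at each stage to meet one ultrafilter-limit requirement; pairwise incomparability and selectivity of the $p_k$'s, encoded in Lemma~\ref{countable.ult}, then ensure that the indices chosen at different stages deposit a $p_k$-large set into every~$p_k$.

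Enumerate the countable data: $D=\{\alpha_s:s\in\omega\}$, and a sequence $((k_s,F_s))_{s\in\omega}$ with $F_s\subseteq\calF^{k_s}$ finite, $F_s\subseteq F_{s+1}$ whenever $k_{s+1}=k_s$, and every pair $(k,f)$ with $f\in\calF^k$ eventually entering some $F_s$ having $k_s=k$. Since $d,d'\neq 0$ have disjoint supports, I pick target points $t_\alpha\in\T$ for $\alpha\in\supp d\cup\supp d'$ with $\sum_\alpha d(\alpha)t_\alpha\neq 0$ and $\sum_\alpha d'(\alpha)t_\alpha\neq 0$ in~$\T$; for a small enough $\epsilon_0>0$, the $\epsilon_0$-arc function $\varrho_0$ with support $\supp d\cup\supp d'$ and arcs around the $t_\alpha$'s will force any refining homomorphism to send $d$ and $d'$ to nonzero elements.

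Recursively, given an $\epsilon_s$-arc function $\varrho_s$ with finite support $E_s\subseteq D$ containing $\alpha_s$ and each $\xi_f$ for $f\in F_s$, I apply Lemma~\ref{step} to $p_{k_s}$, $F_s$, $\epsilon_s$ and $E_s$; the linear-independence premise is granted by the hypothesis on $\calF^{k_s}$. This yields $A_s\in p_{k_s}$ and positive numbers $(\delta_n^s)_{n\in A_s}$. I pick a witness $n_s\in A_s$ (the delicate point, below), choose target arcs $U_f$ strictly inside $\varrho_s(\xi_f)$ of length $\epsilon_s$ centred appropriately, and obtain from the lemma an arc function $\psi_{n_s}\leq\varrho_s$ with $\sum_\mu f(n_s)(\mu)\psi_{n_s}(\mu)\subseteq U_f$ for each $f\in F_s$. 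Set $\varrho_{s+1}:=\psi_{n_s}$ and $\epsilon_{s+1}:=\delta_{n_s}^s$, shrinking further if necessary so that $\epsilon_s\to 0$. For each $\alpha\in D$ the nested arcs $\varrho_s(\alpha)$ determine a unique point $\phi(\chi_\alpha)\in\T$, defining the required homomorphism $\phi:\Z^{(D)}\to\T$. By construction $\phi(d),\phi(d')\neq 0$, and the $p_k$-limit conditions reduce to verifying $\{n_s:k_s=k\}\in p_k$ for every~$k$.

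The main obstacle is organising the selection of the $n_s$. Before starting the recursion, invoke Lemma~\ref{countable.ult} applied to the pairwise-incomparable selective ultrafilters $(p_k)_{k\in\omega}$ to obtain, for each~$k$, a strictly increasing sequence $(a_{k,i})_{i\in\omega}$ and $I_k\subseteq\omega$ with $B_k:=\{a_{k,i}:i\in I_k\}\in p_k$ and the windows $\{[i,a_{k,i}]:k\in\omega,\,i\in I_k\}$ pairwise disjoint. At stage~$s$ with $k_s=k$, replace $A_s$ by $A_s\cap B_k$, still in~$p_k$ (and the conclusion of Lemma~\ref{step} remains valid on this subset since the family of $\psi_n$'s is indexed over $A_s$), forcing $n_s=a_{k,i}$ for some $i\in I_k$; the disjointness of the windows then prevents collisions between selections at different values of~$k$. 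For each fixed~$k$, a standard pseudo-intersection argument based on selectivity of $p_k$, carried out along the subsequence of stages with $k_s=k$, arranges that $\{n_s:k_s=k\}$ is cofinite in (and hence contained in) some $p_k$-member obtained by intersecting the relevant $A_s\cap B_k$'s, so $\{n_s:k_s=k\}\in p_k$ as required.
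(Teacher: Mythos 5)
Your overall plan is the right one, but the bookkeeping collapses at exactly the place you flag as ``the delicate point,'' and the gap is not repairable by a ``standard pseudo-intersection argument.'' First a small misreading: Lemma~\ref{countable.ult} does \emph{not} produce the sequences $(a_{k,i})_{i}$ — it requires them as input (with $\{a_{k,i}:i\in\omega\}\in p_k$ and $i<a_{k,i}$) and returns only the $I_k$'s. You need to say where the $a_{k,i}$ come from, and in the paper's proof the essential requirement is the diagonal condition $a_{k,i}\in B^k_i$, which cannot be imposed before the sets $B^k_i$ exist.

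The real obstruction is a circular dependency that your interleaved recursion creates. You set $\epsilon_{s+1}:=\delta^s_{n_s}$, so $\epsilon_{s+1}$ — and hence the set $A_{s+1}$ that Lemma~\ref{step} hands you at the next stage — depends on the witness $n_s$ you just chose. Consequently the sequence $(A_s:k_s=k)$ is not available in advance, and you cannot diagonalize over it with the selectivity of $p_k$ to produce a $p_k$-large set of witnesses. Picking one $n_s\in A_s\cap B_k$ per stage only yields a countable set built element by element; nothing forces $\{n_s:k_s=k\}$ to belong to $p_k$, and ``cofinite in some $p_k$-member obtained by intersecting the relevant $A_s\cap B_k$'s'' is not a conclusion selectivity gives you under these circumstances (the infinite intersection need not be in $p_k$, and a pseudo-intersection $C$ with $C\setminus A_s$ finite is only obtainable once \emph{all} $A_s$'s are known). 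The paper avoids this by splitting the construction into two separate recursions: first build \emph{all} the $B^k_m\in p_k$, choosing $\epsilon_{m+1}$ small enough to accommodate \emph{every} possible small witness $n\in(m+2)\cap B^k_l$ — a finite set — so that the $\epsilon$'s and $B^k_m$'s are fixed with no witness chosen; only \emph{then} apply selectivity to pick $a_{k,i}\in B^k_i$ with $\{a_{k,i}:i\in\omega\}\in p_k$, feed those into Lemma~\ref{countable.ult}, and finally run the second recursion that builds the nested arc functions along the disjoint windows. You would need to restructure your argument along those lines; as written, the key claim $\{n_s:k_s=k\}\in p_k$ is unjustified.
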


\begin{proof}
Write $D$ as the union of an increasing sequence $(D_n:n\in\omega)$ of finite
nonempty subsets, and take a similar sequence $(\calF^k_n:n\in\omega)$ for 
each~$\calF^k$.
	
Take a sufficiently small positive number $\epsilon_0$ and an 
$\epsilon_0$-arc function~$\varrho_*$ such that 
$\supp d\cup\supp d'\subseteq \supp\varrho_*$ 
and $0\notin\overline{\sum_{\mu\in \supp d}d(\mu)\varrho_*(\mu)}
        \cup\overline{\sum_{\mu\in \supp d'}d'(\mu)\varrho_*(\mu)}$.

Let $E_0=\supp \varrho_* \cup D_0$ and $B^k_0=\omega$ for each $k \in \omega$.

We will define, by recursion, for $m \in \omega$: 
finite sequences $(B^k_m: 0\leq k \leq m)$, 
finite sets $E_m \subseteq \kappa$, and
real numbers $\epsilon_m>0$ satisfying:
\begin{enumerate}
\item For all $k$ and $m$ in $\omega$ we have $B^k_m \in p_k$,
\item For each $m\geq 1$ and $k\leq m$, 
      we have a sequence~$(\delta^k_{m, n}: n \in \omega)$  of positive real 
      numbers such that: 
      if $(U_f: f \in \calF^k_m)$~is a family of arcs of length~$\epsilon_{m-1}$
      and $\varrho$~is an arc function of length~$\epsilon_{m-1}$ and 
      $\supp \varrho \subseteq E_{m-1}$ then for each $n\in\omega$ there 
      exists a $\delta^k_{m, n}$-arc function~$\psi$ with $\psi\le\varrho$,
      and $\supp \psi=\bigcup_{f \in \calF^k_m}\supp f(n)\cup E_{m-1}$, 
      and $\sum_{\mu \in \supp f}f(n)(\mu)\psi(\mu)\subseteq U_f$ for each 
      $f \in \calF^k_m$. 
\item For all $k$ and $m$ we have $B^k_{m+1}\subseteq B^k_m$.
\item $\epsilon_{m+1}= \frac{1}{2}\min(\{\delta^k_{l,n}: k\leq l\leq m+1$ and 
      $ n \in (m+2) \cap B^k_l\}\cup \{\epsilon_m\})$.
\end{enumerate}
Suppose we have defined $B^k_l$ for all~$k$ as well as $E_l$ and $\epsilon_l$
for all $l\le m$.
As will be clear from the step below the set $B^k_m$ is only non-trivial
whenever $k\le m$.
Therefore we let $B^k_{m+1}=B^k_m=\omega$ for $k>m+1$ and we concentrate
on the case~$k\le m+1$.

Let $k\leq m+1$. 
By~Lemma \ref{step}, there exist $B^k_{m+1}\in p_k$ and 
$(\delta^k_{m+1,n}: n \in \omega)$ that satisfy~(2) for~$m+1$.
Without loss of generality we can assume that $B^k_{m+1}\subseteq B^k_m$.
		
Condition~(4) now specifies $\epsilon_{m+1}$.
	
Setting 
$E_{m+1}= E_m \cup 
        \bigcup\{\supp f(k): k\leq m, f \in \bigcup_{k\leq m+1} \calF_{m+1}^k \} 
        \cup D_{m+1}$
completes the recursion.

\smallskip		
For each $k \in \omega$, apply the selectivity of $p_k$, to choose
an increasing sequence $(a_{k, i}: i \in \omega)$ with 
$\{a_{k,i}: i \in \omega\} \in p_k$ and such 
that $a_{k,i} \in B^k_i$ and $a_{k, i}>i$ for all~$i$.
		
Next apply Lemma~\ref{countable.ult} and let $(I_k: k \in \omega)$ be a 
sequence of pairwise disjoint subsets of~$\omega$ such that 		
$\{a_{k,i}: i \in I_k\}\in p_k$ and the family of intervals 
$\{[i,a_{k,i}] : k\in \omega, i \in I_k\}$ is pairwise disjoint. 
Without loss of generality we can assume that $k< \min I_k$.
		
Enumerate $\bigcup_{k \in \omega}I_k$ in increasing order as~$(i_t:t\in\omega)$. 
For each $t \in \omega$, let $k_t$ be such that $i_t \in I_{k_t}$. 
Thus, for each~$t$ we have $i_t \in I_{k_t}$, 
and hence $i_t\geq \min I_{k_t}>k_t$ and $a_{k_t, i_t}>i_t$.
	
By recursion we define a sequence of arc functions,
$(\varrho_{i_t}: t \in \omega)$,
such that $\varrho_{i_0}\leq \varrho_*$ and $\varrho_{i_{t+1}}\leq \varrho_{i_t}$.
		
We start with $t=0$.  
Then we have $k_0<i_0<a_{k_0,i_0}$, and $a_{k_0,i_0}\in B^{k_0}_{i_0}$,
and $\epsilon_{i_0-1}\leq \epsilon_0$.  

Since $\varrho_*$ has length at least $\epsilon_{i_0-1}$,  there exists an arc 
function~$\varrho_{i_0}$ of length~$\delta^{k_0}_{i_0,a_{k_0,i_0}}$ such that 
$\sum_{\mu\in\supp f}f(a_{k_0,i_0})(\mu)\varrho_{i_0}(\mu)\subseteq\varrho_*(\xi_f)$,
for each~$f \in \calF_{i_0}^{k_0}$.  
We have by the definition that $\delta^{k_0}_{i_0,a_{k_0,i_0}}> \epsilon_{i_1-1}$.

Suppose $t> 0$ and that $\varrho_{i_{t-1}} $ has been defined with length at 
least~$\epsilon_{i_{t-1}}$.
		
Apply item (2) to 
the arc function~$\varrho_{i_{t-1}}$, 
the finite set~$\calF=\calF ^{k_t}_{i_t}$,  
the number~$\epsilon_{i_{t-1}}$, 
the finite set~$E_{i_{t-1}}$, 
the arcs $U_f=\varrho_{i_{t-1}}(\xi_f)$ for $f \in\calF_{i_t}^{k_t}$, 
and $n=a_{k_t,i_t}\in B^{k_t}_{i_t}$
to obtain an arc function $\varrho_{i_t}\leq \varrho_{i_{t-1}}$ such that
$\sum_{\mu \in \supp f}f(a_{k_t,i_t})(\mu) \varrho_{i_t}(\mu)\subseteq
    \varrho_{i_{t-1}}(\xi_f)$ for all $f \in \calF_{i_t}^{k_t}$,
and $\varrho_{i_t}$ has length $\delta^{k_t}_{i_t,a_{k_t,i_t}}$. 

Because $k_t<i_t<a_{k_t,i_t}\leq i_{t+1}-1$ and $a_{k_t,i_t} \in B^{k_t}_{i_t}$
we get $\delta^{k_t}_{i_t,a_{k_t,i_t}}>\epsilon_{i_{t+1}-1}$.
		
If $\xi \in D_{i_t}$ then $\xi \in \supp \varrho_{i_t}$ and the length 
of $\varrho_{i_t}(\xi)$ is not greater than~$\epsilon_{i_t-1}$
which in turn is not larger than $\frac{1}{2^{i_{t-1}}}\leq \frac{1}{2^t}$.
		
It follows that for all $\xi\in D$ the intersection 
$\bigcap_{t\in\omega}\varrho_{i_t}(\xi)$
consists of a unique element; 
we define $\phi(\chi_\xi)$ to be that element
and extend $\phi$ to a group homomorphism.
		
By construction 
$\phi(f(a_{k_t,{i_t}}))$ is 
in $\sum_{\mu \in \supp f}f(a_{k_t,i_t})(\mu) \varrho_{i_t}(\mu)$
which is a subset of~$\varrho_{i_{t-1}} (\xi_f)$
whenever $f \in \calF_{i_t}^{k_t}$. 
Therefore, the sequence	$(\phi(f(a_{k,i})))_{i\in I_k}$ converges to 
$\phi(\chi_{\xi_f})$, for each $k\in \omega$ and $f \in \calF^k$.
		
Furthermore $\phi(d) \in \sum_{\mu \in \supp d}d(\mu)\varrho_*(\mu)$, 
therefore, $\phi(d)\neq 0$; and likewise $\phi(d')\neq0$.
		
It is clear that this implies the conclusion of Lemma~\ref{countable.hom.aux}.
\end{proof}
	
Now we are ready to prove Lemma \ref{countable.hom}.

\begin{proof}[Proof of Lemma \ref{countable.hom}]  
There are only a countably many of pairs $(T, n)\in \calT \times \omega$ such 
that $J_{T, n}\cap D\neq \emptyset$. 
We enumerate them faithfully as $((T_m, n_m): m\geq 2)$. 

For $m\geq 2$ let $\calF^m=\{h_\xi: \xi \in D\cap J_{T_m, n_m}\}$ 
and $p_m=p_{T_m, n_m}$. 
Let $p_0$ and $p_1$ be two ultrafilters that were not listed and 
let $\calF^0=\calF^1=\{r\}$. 
For each $m\geq 2$ and $\xi \in J_{T_m, n_m}\cap D$, let $\xi_{h_\xi, m}=\xi$. 
Let $h_{r, 0}=\chi_k$ and $h_{r, 1}=\chi_{k'}$ where $k, k' \in \omega$ are not 
in~$\supp d$. 
Then, by applying Lemma~\ref{countable.hom.aux} with $d'=\chi_k-\chi_{k'}$, 
there exist $\phi:\Z^{(D)}\to \T$ satisfying~(1) and~(2). 
To see it also satisfies~(3), notice 
that $\plim[p_0] \phi\circ r\neq \plim[p_1] \phi\circ r$.
\end{proof}

\section{Proof of Lemma \ref{step}}

In this section we present a proof of Lemma~\ref{step}. 
We will need the notion of integer stack, which was defined 
in~\cite{tomita2015}.

The integer stacks are collections of sequences in~$\Z^{(\cee)}$ that are 
usually associated to a selective ultrafilter. 
Given an finite set of sequences~$\calF$ it is possible to associate it to a 
integer stack which generates the same $\Q$~vector space as~$\calF$. 
The sequences in the stack have some nice properties that help us to construct
well behaved arcs when constructing homomorphisms, and the linear relations 
between $\calF$ and the sequences of the stack helps us to transform these 
arcs into arcs that work for the functions of $\calF$. 
Below, we give the definition of integer stack.

\begin{defin} \label{defin.stack} 
An \emph{integer stack $\calS$ on $A$} consists of
	
\begin{enumerate}[label=(\roman*)]
\item an infinite subset $A$ of $\omega$;
\item natural numbers $s$, $t$, and $M$; 
      positive integers $ r_i$ for $0\leq i<s$ and 
      positive integers $ r_{i,j}$ for $0\leq i<s$ and $0\leq j<r_i$;
\item functions $f_{i,j,k} \in (\Z^{(\cee)})^A$ for 
       $0\leq i< s$, $0\leq j<r_i$ and $0\leq k <r_{i,j}$ and
       elements $g_l\in(\Z^{(\cee)})^A$ for $0\leq l<t$;
\item sequences $\xi_i \in \cee^A$ for $0\leq i<s$ and 
      $\mu_l \in \cee^A$ for $0\leq l <t$ and
\item real numbers $\theta_{i,j,k} $ for $0\leq i<s$, $0\leq j<r_i$ 
      and $0\leq k <r_{i,j}$ 
\end{enumerate}

These are required to satisfy the following conditions:

\begin{enumerate}[label=(\arabic*)]
\item $\mu_l (n) \in \supp g_l(n)$ for each $n\in A$;
\item $\mu_{l^*}(n) \notin \supp g_{l} (n)$ for each $n \in A$ 
       and $0\leq l^*<l<t$;
\item the elements of $\{ \mu_l(n): 0\leq l<t$ and $n\in A\}$ are pairwise 
       distinct;
\item $|g_l(n)| \leq M$ for each $n \in A$ and $0\leq l <t$;
\item $\{\theta_{i,j,k}: 0 \leq k <r_{i,j}\}$ is a linearly independent subset 
      of~$\R$ as a $\Q$-vector space for each $0\leq i<s$ and $0\leq j<r_i$;
\item $\lim_{n\in A}\frac{f_{i,j,k}(n)(\xi_i(n))}{f_{i,j,0}(n)(\xi_i(n))}
       =\theta_{i,j,k}$ 
      for each $0\leq i<s$, $0\leq j<r_i$ and $0\leq k <r_{i,j}$;
\item the sequence $\bigl(|f_{i,j,k}(n)(\xi_i(n))|: n \in A\bigr)$
      diverges monotonically to~$\infty$,
      for each $0\leq i<s$, $0\leq j<r_i$ and $0\leq k <r_{i,j}$;
\item $|f_{i,j,k}(n)(\xi_i(n))|> |f_{i,j,k^*}(n)(\xi_i(n))|$ 
      for each $n\in A$, $i<s$, $j<r_i$ and $0\leq k<k^*<r_{i,j}$;
\item $\left(\frac{|f_{i,j,k}(n)(\xi_i(n))|}{|f_{i,j^*,k^*}(n)(\xi_i(n))|}:
       n \in A \right)$ converges monotonically to $0$ 
     for each $0\leq i<s$, $0\leq j^*< j<r_i$, $0\leq k<r_{i,j}$, and 
     $0\leq k^* <r_{i,j^*}$; and
\item $\{f_{i,j,k}(n)(\xi_{i^*}(n)) : n \in A \} \subseteq [-M,M]$ 
     for each $0\leq i^* <i <s$, $0\leq j<r_i$ and $0\leq k<r_{i,j}$.
\end{enumerate}
\end{defin}

It is not difficult to show that the sequences of the stack are linearly 
independent. 
Moreover, if $p$~is a free ultrafilter, $\calS$~is a stack over~$A$, 
and $A \in p$, then it is not difficult to see that 
$({[g_l]}_p: l<t)\cup ({[f_{i, j, k}]}_p: i<s, j<r_i, k<r_{i, j})$ is linearly 
independent in the $\Q$-vector space~$\Q^{(\cee)}/p$. 
We leave the details as an exercise to the reader.

\begin{defin} 
Given an integer stack $\calS$ and a natural number~$N$, 
the \emph{$Nth$ root of $\calS$}, written~$\frac{1}{N}\calS$, is obtained by
keeping all the structure in $\calS$ with the exception of the functions;
these are divided by~$N$. 
Thus a function~$f_{i,j,k} \in \calS$ is replaced 
by~$\frac{1}{N}f_{i,j,k}$ in~$\frac{1}{N} \calS$ 
for each $0\leq i <s$, $0\leq j<r_i$ and $0\leq k<r_{i,j}$ and 
a function~$g_l \in \calS$ is replaced by~$\frac{1}{N}   g_l$  
in  $\frac{1}{N}  \calS$ for each $0\leq l <t$.

A \emph{stack} (unqualified) is then defined to be the $Nth$ root of an 
integer stack for some positive integer~$N$.
\end{defin}

The lemma below gives the relation between a finite sequence of sequences 
in $\Z^{(\cee)}$ and a stack~$\calS$ that is associated to it. 
The first part of this lemma is proved in~\cite{tomita2015}. 
The second part was stated in~\cite{boero&castro&tomita2019} with no proof 
presented there, since it follows directly from statements of 
several lemmas and constructions from~\cite{tomita2015}. 
Since the construction there is long and complicated, we sketch in this paper,
for the sake of completeness, a proof for the second part by indicating 
which statements and proofs from~\cite{tomita2015} are used, without 
repeating the arguments.

\begin{lem} \label{stack} 
Let $h_0$, \dots, $h_{m-1}$ be sequences in $\Z^{(\cee)}$ 
and $\calU \in \omega^*$ a selective ultrafilter. 
Then there exists $A\in \calU$ and a stack~$\frac{1}{N} \calS$ on~$A$ 
such that:
if the elements of the stack have a $\calU$-limit in~$\Z^{(\cee)}$ then 
$h_i$~has a $\calU$-limit in~$\Z^{(\cee)}$ for each $0\leq i<m$.

We will say in this case that the finite sequence 
$\{ h_0, \ldots , h_{m-1}\}$ is associated to $(\frac{1}{N} \calS, A, \calU)$.
\begin{enumerate}
\item[$(\#)$] If $\{[h_0]_\calU, \ldots, [h_{m-1}]_\calU\}$ is a $\Q$-linearly 
              independent set and the group generated by it does not contain 
              nonzero constant classes, then each restriction~$h_i|_A$  is an 
              integer combination of the stack~$\frac{1}{N} \calS$ on~$A$. 
              On the other hand, each element of the integer stack~$\calS$ is 
              an integer combination of $\{h_0, \ldots, h_{m-1}\}$ restricted 
              to~$A$.
\end{enumerate}
\end{lem}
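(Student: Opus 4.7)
The plan is to reduce the statement to the case where $\{[h_0]_\calU,\dots,[h_{m-1}]_\calU\}$ is $\Q$-linearly independent and contains no nonzero constant class, and then invoke the construction of \cite{tomita2015}. For the reduction, pick a maximal $\Q$-linearly independent subset of $\{[h_i]_\calU:i<m\}$; each remaining $[h_i]_\calU$ is then a $\Q$-combination of the chosen classes plus finitely many classes $[\chi_{\vec\xi}]_\calU$, and by selectivity we may shrink to a single $A\in\calU$ on which the corresponding equalities hold pointwise. After multiplying the stack by $\tfrac{1}{N}$, where $N$ clears all the denominators appearing in these rational relations, the integer combination property $(\#)$ for the original $h_i$'s follows from the one for the reduced family.

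So the substance lies in the linearly independent, constant-free case. First I would apply selectivity of~$\calU$ (in the sharpened form: for any partition of~$[\omega]^2$ into finitely many classes there is a homogeneous set in~$\calU$) to obtain $A_0\in\calU$ on which the combinatorial pattern of the supports $\supp h_i(n)$ stabilizes, and on which, for each fixed coordinate $\xi$ that appears in infinitely many supports, the sign and approximate growth of $|h_i(n)(\xi)|$ is controlled. Coordinates $\xi$ at which the values $h_i(n)(\xi)$ remain bounded (uniformly in $i$) are collected into a single bounded summand at each step; enumerating them along $A_0$ produces (after further thinning via selectivity) the sequences $\mu_l$ of witness coordinates and the associated $g_l$, satisfying conditions (1)--(4) of Definition~\ref{defin.stack}.

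For the unbounded part, I would extract the $f_{i,j,k}$ by an inductive process on the stack indices. The index~$i$ enumerates coordinates $\xi_i(n)$ of successively smaller order of magnitude: having chosen the coordinates of largest order, selectivity applied to the ratio functions yields $A\in\calU$ on which the chosen coordinates dwarf the remaining ones by a factor tending to infinity, giving condition~(10); within a fixed $\xi_i$, the $j$-index stratifies the $h_i$'s by order of magnitude at $\xi_i(n)$ (giving (7)--(9)), and within each $j$ a further application of selectivity to the sequences of ratios $f_{i,j,k}(n)(\xi_i(n))/f_{i,j,0}(n)(\xi_i(n))$ extracts a maximal set with limits $\theta_{i,j,k}$ that are $\Q$-linearly independent (condition~(5) and~(6)). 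Then $(\#)$ is read off directly: by construction each $h_i|_A$ is an integer combination of $\tfrac{1}{N}\calS$, and each stack element was produced as an integer combination of the~$h_i|_A$.

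The main obstacle is the delicate bookkeeping required to produce the $f_{i,j,k}$ in an order that simultaneously realizes strict inequalities (7)--(9), the limit relations~(6), and the uniform bound~(10), all while preserving the integer combination property in both directions. This is precisely the intricate inductive construction carried out in~\cite{tomita2015}; my proposal here only names the selectivity applications and indicates how the conclusions of $(\#)$ are bookkept, without reproducing the technical lemmas of that paper.
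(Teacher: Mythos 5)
Your approach is the same in spirit as the paper's: defer the existence of the stack to \cite{tomita2015} and then argue that the integer-combination property $(\#)$ ``falls out of the construction.'' But the paper's proof of $(\#)$ consists precisely of verifying that assertion, and this is where your proposal has a genuine gap. You write that $(\#)$ ``is read off directly: by construction each $h_i|_A$ is an integer combination of $\tfrac{1}{N}\calS$, and each stack element was produced as an integer combination of the $h_i|_A$.'' The second direction is not obvious from the kind of sketch you give: the construction in \cite{tomita2015} builds the stack through a long chain of auxiliary functions (the $f^{i,j}_q$, $g^0_q$, $g^i_j$, etc.), and one has to actually check, at each stage, that passing from one family of auxiliary functions to the next preserves the property of being an integer combination of the original $h_i$'s restricted to $A$. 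The paper does exactly this: it cites conditions x), xi), xii), xiv) of Lemma~5.4 and conditions g), c) of Lemma~6.1 of \cite{tomita2015}, combined with a finite induction, and then traces how these functions become the final $f_{i,j,k}$ and $g_l$ through Lemmas~5.5, 6.2, and 7.1. Without naming which clauses of the construction guarantee integrality, the claim is exactly the unproved assertion from \cite{boero&castro&tomita2019} that the present paper is filling in.

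Two secondary remarks. First, your opening reduction to the linearly independent, constant-free case is unnecessary for $(\#)$, which is \emph{only} asserted under that hypothesis; the sentence ``the integer combination property $(\#)$ for the original $h_i$'s follows from the one for the reduced family'' is confused, since $(\#)$ says nothing about a linearly dependent family. (The reduction is relevant to the first, unconditional, existence statement, which the paper simply cites to \cite{tomita2015}.) Second, the statement carries an asymmetry that your sketch glosses over: the $h_i|_A$ are integer combinations of the scaled stack $\tfrac{1}{N}\calS$, while the elements of the unscaled integer stack $\calS$ are integer combinations of the $h_i|_A$. This factor of $N$ is not cosmetic --- it is what makes both the matrix $\calM$ and its essentially inverse $\calN$ in the proof of Lemma~\ref{step} have integer entries --- and a proof of $(\#)$ needs to keep track of it explicitly.
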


\begin{proof}
We prove $(\#)$. 
All numbered references in this proof are to the paper~\cite{tomita2015}.
 
First, notice that if $\{[h_0]_\calU, \ldots, [h_{m-1}]_\calU\}$ is 
a $\Q$-linearly independent set and the group generated by it does not 
contain nonzero constant classes, then it satisfies the conclusion of 
Lemma~4.1. 
Then, following the proof of Lemma~7.1, 
using the~$f$'s as the $h$'s themselves, we see that the 
functions $h_0$, \dots, $h_{m-1}$ are integer combinations of the 
stack~$\frac{1}{N}.\calS$ that was constructed. 

It remains to see that the functions of $\calS$ are integer combinations of 
the functions~$h_i$ restricted to~$A$. 
First, notice that in the statement of Lemma~5.4, 
by~x), xi), xii) and~xiv) the functions~$f_q^{i, j}$ and~$g_q^0$ are integer 
combinations of the~$h_i$. 
This Lemma is used in the proof of Lemma~5.5, where 
the functions~$f_q^{i, j}$ become the functions~$f_{i, j, k}$, so there are 
integer combinations of the $h_i$'s.

Now notice that in Lemma~6.1, by~g), c) and finite induction, the 
functions~$g_j^i$ are integer combinations of the~$h_i$, and some of these 
become the~$g_i$'s in the proof of Lemma~6.2.
As in the proof of~7.1 the stack is constructed by 
applying Lemma~5.5 or Lemma~6.2 
or Lemma~5.5 followed by Lemma~6.2 
(depending on the case), it follows that the stack constructed consists of 
functions that are linear combinations of functions the $h_i$'s 
(restricted to~$A$).
\end{proof}

Now we define some integers related to Kronecker's Theorem that will be 
useful in our proof. 
The existence of these integers follows from Lemma~4.3. of~\cite{tomita2015}. 
These integers were also defined and used in that paper.

\begin{defin} 
If $\{\theta_0, \ldots , \theta_{r-1}\}$ is a linearly independent subset of 
the $\Q$-vector space~$\R$ and $\epsilon>0$ then 
$L(\theta_0 , \ldots , \theta_{r-1},\epsilon)$ denotes a positive integer, $L$,
such that $\{(\theta_0 x +\Z, \ldots , \theta_{r-1} x +\Z): x \in I\}$ is 
$\epsilon$-dense in $\T^r$ in the usual Euclidean metric product topology, 
for any interval~$I$ of length at least~$L$.
\end{defin}

The last lemma we are going to need is Lemma~8.3 from \cite{tomita2015}, 
stated below.

\begin{lem} \label{homomorphism.step} 
Let $\epsilon$, $\gamma$ and $\rho$ be positive reals, 
$N$~a positive integer and $\psi$ be an arc function. 
Let $\calS$ be an integer stack on~$A \in [\omega]^{\omega}$ 
and $s$, $t$, $r_i$, $r_{i,j}$, $M$, $f_{i,j,k}$, $ g_l$, $\xi_i$, $\mu_j$ 
and~$\theta_{i,j,k}$ be as in Definition~\ref{defin.stack}. 

Let $L$ be an integer greater or equal to 
$\max\{L(\theta_{i,j,0}, \ldots ,\theta_{i,j,r_{i,j}-1}, \frac{\epsilon}{24}):
0\leq i<s$ and $0\leq j<r_i \}$ and 
let $r=\max \{r_{i,j}: 0\leq i<s$ and $0\leq j<r_i \}$.

Suppose that $n\in A$ is such that
\begin{enumerate}[label=(\alph*)]
\item $\{ V_{i,j,k}: 0\leq i <s, 0\leq j <r_i$ and $0\leq k <r_{i,j}\} 
      \cup \{W_l: 0\leq l<t\}$ is a family of open arcs of length~$\epsilon$;
\item $\delta (\psi(\beta))\geq \epsilon$ for each $\beta \in \supp \psi$;
\item $\epsilon>3N\cdot\rho\cdot
      \max\bigl(\{\|g_l(n)\|:0\leq l<t\} \cup
      \bigcup\{\|f_{i,j,k}(n)\|:0\leq i<s,0\leq j <r_i,0\leq k<r_{i,j}\}\bigr)$;
\item $3  M N s \gamma <\epsilon$;
\item $|f_{i,r_i-1,0}(n)(\xi_i(n))|\cdot \gamma > 3  L$ for each $0\leq i<s$;
\item $|f_{i,j-1,0}(n)(\xi_i(n))|\cdot
       \frac{\epsilon}{6\sqrt{r_{i,j}} |f_{i,j,0}(n)|} >3L$ for 
       each $0\leq i <s$ and $0<j<r_i$;
\item $\left|\theta_{i,j,k}-
        \frac{f_{i,j,k}(n)(\xi_i(n))}{f_{i,j,0}(n)(\xi_i(n))}\right|
        <\frac{\epsilon}{24\sqrt{r}L}$ 
        for each $i<s$, $j<r_i$ and $k<r_{i,j}$ and
\item $\supp \psi \cap \{ \mu_0(n), \ldots , \mu_{t-1}(n)\}=\emptyset$.
\end{enumerate}

Then there exists an arc function $\phi$ such that

\begin{enumerate}[label=(\Alph*)]
\item $N\cdot \phi(\beta)\subseteq 
       N\cdot \overline{\phi(\beta)}\subseteq 
       \psi(\beta)$ for each $\beta \in \supp \psi$;
\item $\sum_{\beta \in \supp g_l(n)} g_l(n)(\beta)  \phi(\beta) \subseteq W_l$ 
      for each $l <t$;
\item $\sum_{\beta \in \supp f_{i,j,k}(n)} f_{i,j,k}(n)(\beta) \cdot \phi(\beta) 
      \subseteq V_{i,j,k}$ for each $i<s$, $j<r_i$ and $k <r_{i,j}$;
\item $\delta(\phi(\beta))=\rho$ for each $\beta \in \supp \phi$ and
\item $\supp \phi$ can be chosen to be any finite set containing 
      \[
      \supp \psi \cup 
        \bigcup_{0\leq i<s, 0\leq j <r_i, 0\leq k<r_{i,j}}\supp f_{i,j,k}(n) \cup 
        \bigcup_{0\leq l <t} \supp g_l(n).
      \qed\]
\end{enumerate}
\end{lem}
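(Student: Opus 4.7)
The plan is to construct $\phi$ in stages: first fix the support of $\phi$ and give each $\phi(\beta)$ an arc of length $\rho$ centered at some $c_\beta$, then adjust these centers at the distinguished points $\xi_i(n)$ and $\mu_l(n)$ using Kronecker's theorem. I would take $\supp \phi$ to be the set prescribed in (E); for each $\beta \in \supp \psi$ I put the initial center $c_\beta$ inside $\psi(\beta)$ leaving an $N\rho$-margin, which is possible since (b) and (c) give $N\rho < \epsilon \leq \delta(\psi(\beta))$. This immediately yields (A) and (D); condition (h) makes (A) vacuous at each $\mu_l(n)$. The remaining task is to pick $c_{\xi_i(n)}$ and $c_{\mu_l(n)}$ so that (B) and (C) hold.

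The heart of the proof is an iterated Kronecker construction for the stack functions $f_{i,j,k}$, processed in reverse $i = s-1, s-2, \ldots, 0$. At stage $i$, I let $c_{\xi_i(n)}$ vary in an outer interval of length $\gamma$. Condition (e) says $|f_{i,r_i-1,0}(n)(\xi_i(n))|\cdot\gamma > 3L$, so the image of this interval under $x \mapsto f_{i,r_i-1,0}(n)(\xi_i(n))\cdot x$ has length $> 3L$. Kronecker's theorem, combined with the approximation bound (g) that converts $f_{i,j,k}(n)(\xi_i(n))/f_{i,j,0}(n)(\xi_i(n))$ into $\theta_{i,j,k}$ up to error $\epsilon/(24\sqrt{r}L)$, then produces a sub-interval of length on the order of $\epsilon/(6\sqrt{r_{i,r_i-1}}|f_{i,r_i-1,0}(n)|)$ on which every choice of $c_{\xi_i(n)}$ places each $\sum_\beta f_{i,r_i-1,k}(n)(\beta)\phi(\beta)$ inside the target $V_{i,r_i-1,k}$ up to a small residual. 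Condition (f) then guarantees that this sub-interval is still long enough to run Kronecker at level $j = r_i - 2$, and nesting down through $j = 0$ yields a final sub-interval from which any choice of $c_{\xi_i(n)}$ simultaneously satisfies (C) at this $i$, modulo an error of order $\rho\cdot\|f_{i,j,k}(n)\|$ that is bounded by $\epsilon/(3N)$ via (c). I reserve additional margin for cross-$i$ perturbations: later stages at $i^* < i$ can move each $f_{i,j,k}$-sum by at most $|f_{i,j,k}(n)(\xi_{i^*}(n))|\cdot\gamma \leq M\gamma$ by condition (10), which aggregates to $\leq Ms\gamma < \epsilon/(3N)$ by (d).

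Once all $c_{\xi_i(n)}$ are fixed, I process $l = t-1, \ldots, 0$ for the $\mu_l(n)$. At stage $l$, condition (1) gives $g_l(n)(\mu_l(n)) \neq 0$, so letting $c_{\mu_l(n)}$ range over a unit interval of $\T$ sweeps the term $g_l(n)(\mu_l(n))\cdot c_{\mu_l(n)}$ once around $\T$; I pick a value that places $\sum_\beta g_l(n)(\beta)\phi(\beta)$ inside $W_l$, noting that the diameter of this sum is at most $\rho\cdot\|g_l(n)\| < \epsilon/(3N) < \delta(W_l)$ by (c). Condition (2) then guarantees that later stages $l^* < l$ do not touch $\supp g_l(n)$, so the $g_l$-placement is preserved. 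The main obstacle is the bookkeeping: each of (a)--(h), together with the interactions between the $\xi_i$-adjustments and the $\mu_l$-adjustments, is tuned precisely so that the nested Kronecker intervals exist at every level, and so that the approximation error from (g), the arc-thickness error from (c), and the cross-$i$ perturbation from (d) all fit comfortably inside the length-$\epsilon$ arcs $V_{i,j,k}$ and $W_l$. Once this ledger is carefully set up, conclusions (A)--(E) follow by construction.
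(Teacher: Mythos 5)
You should first be aware that the paper never proves Lemma~\ref{homomorphism.step}: it is imported verbatim as Lemma~8.3 of \cite{tomita2015} and stated with a q.e.d., so there is no in-paper argument to measure yours against. Your sketch does reconstruct the standard architecture of the original proof --- nested Kronecker intervals driven by the dominant coordinate $\xi_i(n)$ for the $f_{i,j,k}$, a full rotation at the witness coordinate $\mu_l(n)$ for the $g_l$, and a ledger of the three error sources through (c), (d) and (g) --- and the within-phase details are matched correctly to the hypotheses: (e) launches the Kronecker recursion at $j=r_i-1$, (f) guarantees each sub-interval is long enough for the next level, (g) converts the ratios into the $\theta_{i,j,k}$, condition (10) of Definition~\ref{defin.stack} together with (d) absorbs the cross-$i$ perturbations when you descend through $i=s-1,\dots,0$, and condition (2) protects earlier $g$-placements when you descend through $l=t-1,\dots,0$.

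The genuine gap is the order of your two phases. You freeze the $f$-placements first and only afterwards move the centres $c_{\mu_l(n)}$ to capture the $g_l$. To drag $\sum_{\beta}g_l(n)(\beta)\phi(\beta)$ into $W_l$ you must in general move $c_{\mu_l(n)}$ by a fixed positive amount, up to $1/(2|g_l(n)(\mu_l(n))|)\ge 1/(2M)$, which is not controlled by $\gamma$ or $\rho$. Nothing in Definition~\ref{defin.stack} bounds $f_{i,j,k}(n)(\mu_l(n))$: conditions (1)--(4) constrain only the $g_l$ at the $\mu$'s, and condition (10) constrains the $f$'s only at the coordinates $\xi_{i^*}(n)$. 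So if $\mu_l(n)\in\supp f_{i,j,k}(n)$ with a large coefficient, your second phase throws the already-placed sum $\sum_\beta f_{i,j,k}(n)(\beta)\phi(\beta)$ arbitrarily far out of $V_{i,j,k}$ modulo $1$. You carefully handle the $g$--$g$ interaction (via (2)) and the $g$--$\psi$ interaction (via (h)), but never the $g$--$f$ interaction. The repair is to reverse the phases: settle the $c_{\mu_l(n)}$ first, in decreasing $l$, centering each $g_l$-sum in $W_l$ with an $\epsilon/(3N)$ margin reserved for the later $\xi_i$-adjustments --- these move each $g_l$-sum by at most $sM\gamma<\epsilon/(3N)$ precisely because condition (4) bounds every value of $g_l(n)$ by $M$ --- and only then run the Kronecker phase, during which the $\mu_l(n)$-contributions to the $f$-sums are already frozen constants. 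With that reordering your ledger closes.
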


Now we are ready to prove Lemma~\ref{step}.

\begin{proof}[Proof of Lemma~\ref{step}]
Write $\calF=\{u_0, \dots u_{q-1}\}$ with no repetition. 
Let $\calS$ be an integer stack on~$A'\in p$ and let $N$ be a positive integer 
such that $\left(\frac{1}{N}\calS , A', p \right)$ is associated to~$\calF$.

As in Definition~\ref{defin.stack} the components of~$\calS$ will be 
denoted $s$, $t$, $M$, $(r_i : i<s)$, 
$(r_{i,j}: i<s, j< r_i)$, 
$(f_{i,j,k}: i<s , j< r_i, k<r_{i,j})$, 
$(g_l: l<t)$, 
$( \xi_i: i<s)$, 
$(\mu^p: i<t)$ and 
$(\theta_{i,j,k}: 0\leq i < s, 0\leq j <r_i, k<r_{i,j})$.

We write $\{ f_{i,j,k}: i<s_p , j< r_i,  k<r_{i,j} \} \cup \{g_l: l<t\}$ 
as $\{v_0, \ldots , v_{q-1}\}$. 

Let $\calM$ be the $q \times q$ matrix of integer numbers such 
that $N u_i(n)= \sum_{j<q}\calM_{i,j}v_j(n)$ for all $n \in A$ and $i<q$. 

By $(\#)$ in Lemma~\ref{stack}, each $v_j$ is an integer combination of 
the~$u_i$'s, therefore the inverse matrix of $\frac{1}{N}\calM$, 
which we denote by~$\calN$, has integer entries.

\smallskip
Let $\epsilon' =\epsilon\cdot(\sum_{i,j<l}|\calM_{i,j}|)^{-1}$ and 
$\gamma <\epsilon'/(3  M N s)$. 
Let $L$ be larger than or equal to the maximum of the set 
$\{ L(\theta_{i,j,0}, \ldots , \theta_{i,j,r_{i,j}-1},\epsilon'/24): i<s, j<r_i\}$.

For each $n \in A'$, let $\delta_n<\frac{1}{2}$ be such that:
$$
\epsilon' > 3 N  \cdot
\max\bigl(\{\|g_l(n)\|: 0\leq l <t\} \cup 
    \bigcup \{\|f_{i,j,k}(n)\|: 0\leq i<s, 0\leq j <r_i, 0\leq k<r_{i,j}\}\bigr)
 \cdot\frac{\delta_n}{N}
$$
We note that both $N$'s above cancel but we write this way as we will 
use~${\delta_n}/{N}$ in the place of~$\rho$ in item~c) of 
Lemma~\ref{homomorphism.step}.

Let $r=\max \{r_{i,j}: 0\leq i<s,  0\leq j<r_i \}$. 
Let $A$ be the set of $n$'s in $A'$ such that:

\begin{itemize}
\setlength\itemsep{1.6em}
    \item $\displaystyle| f_{i,r_i-1,0}(n)(\xi_i(n))|  \gamma > 3  L\text{ for each } 0\leq i<s,$,
    
    \item $\displaystyle|f_{i,j-1,0}(n)(\xi_i(n))|\cdot
       \frac{\epsilon'}{6\sqrt{r_{i,j}} |f_{i,j,0}(n)|} >3L$ for 
       each $0\leq i <s$ and $0<j<r_i$,

\item $\displaystyle\left|\theta_{i,j,k}-\frac{f_{i,j,k}(n)(\xi_i(n))}{f_{i,j,0}(n)(\xi_i(n))}\right|<
\frac{\epsilon'}{24\sqrt{r}L} 
\text{ for each }i<s, j<r_i \text{ and }k<r_{i,j},$ and

\item $\displaystyle E \cap \{ \mu_0(n), \ldots , \mu_{t-1}(n)\}=\emptyset.
$
\end{itemize}

Notice that $A$ is cofinite in $A'$, therefore $A \in p$.

We claim this $A$ and this sequence $(\delta_n: n \in A)$ work. 

Fix $n \in A$.

Let $(U_f: f \in \calF)$ be a family of arcs of length~$\epsilon$ and 
let $\varrho$ be an arc function of length at least~$\epsilon$ with 
$\supp \varrho\subseteq E$. 
We rewrite the family of arcs as $(U_i: i<q)$, where $U_i=U_{f_i}$ for 
each $i<q$. 
For each $i<q$ let $y_i$ be a real such that $y_i+\Z$ is the center of~$U_i$. 
Let $z_j=\sum_{i<q}\calN_{j, i} \frac{y_i}{N}$ and, for each~$j$ let $R_j$ be 
the arc of center~$z_j$ and length~$\epsilon'$. 
Since $\calN$~is a matrix of integers, 
$z_j+\Z=\sum_{i<q}\calN_{j, i} (\frac{y_i}{N}+\Z)$.  
Then the arc $\sum_{j<q}\calM_{i,j}R_j$ is a subset of~$U_i$ for each $i<q$.

Now we aim to apply Lemma~\ref{homomorphism.step}. 
Set $\psi=\varrho$, $\rho=\delta_n/N$ and $\epsilon'$ in the place 
of~$\epsilon$. 
For $i<s$, $j<r_i$, $k<r_{i, j}$ we put $V_{i, j, k}=R_x$ if $f_{i, j, k}=v_x$ 
for some $x<q$, 
and for $j<t$ we put $W_j=R_x$ if $g_{j}=v_x$ for some $x<q$.

Then there exists an arc function $\tilde{\psi_n}$ such that

\begin{enumerate}[label=(\Alph*)]
\item $N\tilde{\psi_n}\subseteq N \overline{\tilde{\psi_n}}
       \subseteq \varrho(\beta)$ for each $\beta \in \supp \psi$;
\item $\sum_{\beta \in \supp g_l(n)} g_l(n)(\beta) \tilde{\psi_n}(\beta) 
       \subseteq W_l$ for each $l <t$;
\item $\sum_{\beta\in\supp f_{i,j,k}(n)}f_{i,j,k}(n)(\beta)\cdot\tilde{\psi_n}(\beta) 
       \subseteq V_{i,j,k}$ for each $i<s$, $j<r_i$ and $k <r_{i,j}$;
\item $\delta(\tilde{\psi_n}(\beta))={\delta_n}/N$ 
       for each $\beta \in \supp \tilde{\psi_n}$ and
\item $\supp \tilde{\psi_n}$ is equal to
      \[
      \bigcup_{0\leq i<s, 0\leq j <r_i, 0\leq k<r_{i,j}}\supp f_{i,j,k}(n) \cup 
      \bigcup_{0\leq l <t} \supp g_l(n)\cup E=
      \bigcup_{f \in \calF}\supp f(n) \cup E.
\]
\end{enumerate}

Let $\psi_n=N\tilde{\psi_n}$. 
By~(A), $\psi_n\leq \varrho$. 
By~(E) and~(D), $\supp \psi_n=\bigcup_{f \in \calF} \supp f(n)\cup E$ and for 
each $\beta \in \supp \psi_n$, we have $\delta(\psi_n(\beta))=\delta_n$. 
Let $S=\supp \psi_n$. 
Now notice that given $u_i \in \calF$ we have:
\begin{align*}
\sum_{\mu \in \supp u_i }u_i(n)(\mu)\psi_n(\mu)
&=\sum_{\mu \in S }u_i(n)(\mu)N\tilde\psi_n(\mu)\\
&=\sum_{\mu \in S}\left( \sum_{j<q}\calM_{i,j}v_j(n)(\mu)\right)\tilde\psi_n(\mu)\\
&=\sum_{j<q}\calM_{i,j}\left( \sum_{\mu \in S}v_j(n)(\mu)\tilde\psi_n(\mu)\right)
\end{align*}
Then by (B), (C) and the definitions of the $W_l$'s and $V_{i, j, k}$'s:
$$
\sum_{\mu \in \supp u_i }u_i(n)(\mu)\psi_n(\mu)=\sum_{\mu \in S }u_i(n)(\mu)N\tilde\psi_n(\mu)\subseteq\sum_{j<q}\calM_{i,j}R_j\subseteq U_i.
$$ 
As intended.
\end{proof}

\section{Final comments}
	
The method to construct countably compact free Abelian groups came from the technique to construct countably compact groups without non-trivial convergent sequences. It is not known if there is an easier method to produce countably compact group topologies on free Abelian groups if we do not care if the resulting topology has convergent sequences. 

In fact, even to produce a countably compact group topology with convergent 
sequences in non-torsion groups it is used a modification of the technique 
to construct 
countably compact groups without non-trivial convergent sequences, see 
\cite{bellini&boero&castro&rodrigues&tomita} and
\cite{boero&castro-pereira&tomitaoneselective}.
	
The first examples of countably compact groups without non-trivial convergent
sequences were obtained by Hajnal and Juh\' asz \cite{hajnal&juhasz} 
under~CH.  
E. van~Douwen \cite{vandouwen2} obtained an example from~MA and asked for 
a ZFC~example. 
Other examples  were obtained using $\mathrm{MA_{countable}}$ 
\cite{koszmider&tomita&watson}, 
a selective ultrafilter \cite{garcia-ferreira&tomita&watson}
and in the Random real model \cite{szeptycki&tomita}. 
Only recently, Hru\v sak, van Mill, Shelah and Ramos obtained an example 
in ZFC (\cite{hrusak&vanmill&ramos&shelah}).

This motivates the following questions in ZFC:
 
\begin{question}  
Are there large countably compact groups without non-trivial convergent 
sequences in~ZFC? 
Is there an example of cardinality~$2^\cee$?
\end{question}
 
The example of Hru\v sak et~al has size continuum and it is not clear if 
their construction could yield larger examples.
 
\begin{question}
Is there a countably compact free Abelian group in~ZFC? 
A countably compact free Abelian group without non-trivial convergent 
sequences in~ZFC?
\end{question}
  
It is still open if there exists a torsion-free group in ZFC that admits a 
countably compact group topology without non-trivial convergent sequences. 
If such example exists then there is a countably compact group topology 
without non-trivial convergent sequences in the free Abelian group of 
cardinality~$\cee$ 
(see \cite{tomitaTopApplsquare2005} or \cite{tomitaTopAppl2019ZFC}).
  
\begin{question}
Is there a both-sided cancellative semigroup that is not a group that admits 
a countably compact semigroup topology (a Wallace semigroup) in~ZFC?
\end{question}
 
The known examples were obtained 
in~\cite{robbie&svetlichny} under~CH,  
in~\cite{tomita} under~$\mathrm{MA_{countable}}$, 
in~\cite{madariaga-garcia&tomita} from $\cee$~incomparable selective 
ultrafilters and 
in~\cite{boero&castro-pereira&tomitaoneselective} from one selective ultafilter.
The last two use the known fact that a free Abelian group without non-trivial 
convergent sequences contains a Wallace semigroup, which was used 
in~\cite{robbie&svetlichny}. 
The example in~\cite{tomita} was a modification of~\cite{hart&vanmill}.

\bibliographystyle{amsplain}
\bibliography{gruposenumeravelmentecompactos}
	
\end{document}